\newtheorem{theorem}{Theorem}
\newtheorem{lemma}[theorem]{Lemma}
\newtheorem{proposition}[theorem]{Proposition}
\theoremstyle{definition}
\theoremstyle{remark}
\newtheorem{remark}[theorem]{Remark}
\numberwithin{equation}{section}
\numberwithin{theorem}{section}
\def\A{{\mathcal A}}
\def\AA{{\mathbb A}}
\def\C{{\mathbb C}}
\def\CC{{\mathcal C}}
\def\F{{\mathbb F}}
\def\FF{{\tt F}}
\def\FFF{{\mathcal F}}
\def\G{{\mathcal G}}
\def\GCC{{\G\CC}}
\def\L{{\mathcal L}}
\def\O{{\mathcal O}}
\def\P{{\mathcal P}}
\def\UU{\overline{\A}}
\def\Y{{\mathcal Y}}
\def\Z{{\mathbb Z}}
\def\bfG{{\mathbf \Gamma}}
\def\one{\mathbf 1}
\def\phhi{{\varphi}}
\def\x{{\bf x}}
\def\Mat{\operatorname{Mat}}
\def\Poi{{\{\cdot,\cdot\}}}
\def\Van{\operatorname{Van}}
\def\deg{{\operatorname{deg}}}
\def\diag{\operatorname{diag}}
\def\:{{:\ }}
\begin{document}

\title
{Periodic Staircase Matrices and Generalized Cluster Structures}

\author{Misha Gekhtman}

\address{Department of Mathematics, University of Notre Dame, Notre Dame,
IN 46556}
\email{mgekhtma@nd.edu}

\author{Michael Shapiro}
\address{Department of Mathematics, Michigan State University, East Lansing,
MI 48823}
\email{mshapiro@math.msu.edu}

\author{Alek Vainshtein}
\address{Department of Mathematics \& Department of Computer Science, University of Haifa, Haifa,
Mount Carmel 31905, Israel}
\email{alek@cs.haifa.ac.il}
 
\begin{abstract}
As is well-known, cluster transformations in cluster structures of geometric type are often modeled on determinant identities,
such as short Pl\" ucker relations, Desnanot--Jacobi identities and their generalizations. We present a construction that plays 
a similar role in a description of generalized cluster transformations and discuss its applications to generalized cluster
structures in $GL_n$ compatible with a certain subclass of Belavin--Drinfeld Poisson--Lie brackets, in the Drinfeld double 
of $GL_n$, and in spaces of periodic difference operators.
\end{abstract}
\maketitle

\medskip

\section{Introduction}

Since the discovery of cluster algebras in \cite{FZ2}, many important algebraic varieties were shown to support a cluster structure in a sense that the coordinate rings of such variety  is  isomorphic to a cluster algebra or an upper cluster algebra. Lie theory and representation theory turned out to be a particularly rich source of varieties of this sort including but in no way limited to such examples as 
Grassmannians \cite{GSV1, Scott}, double Bruhat cells \cite{CAIII} and strata in flag varieties \cite{Leclerc}. In all these examples, cluster transformations that connect distinguished coordinate charts within a ring of regular functions are modeled on three-term relations such as short Pl\"ucker relations, Desnanot--Jacobi identities and their Lie-theoretic generalizations of the kind considered in \cite{FZ1}. This remains true even in the case of exotic cluster structures on $GL_n$ considered in \cite{GSVMem,GSVPleth} where cluster transformations can be obtained by applying Desnanot--Jacobi type identities to certain structured matrices of a size far exceeding $n$.

On the other hand, as we have shown in \cite{GSVCR,GSVDouble}, there are situations when, in order to stay within a ring of regular functions, on has to employ generalized cluster transformations, i.e. exchange relations in which the product of
a cluster variable being removed and the variable that replaces it
is equal to a multinomial expression in other cluster variables in the seed rather than a binomial expression appearing in 
the definition of the usual cluster transformation. Generalized cluster transformations of this kind were first considered in \cite{CS}, and in \cite{GSVCR,GSVDouble} we used them, in a more general form,  to construct a generalized cluster structure in the standard Drinfeld double of $GL_n$ and several related varieties. There, we had to rely on an $(n+1)$-term identity 
\cite[Proposition 8.1]{GSVDouble} (see also Proposition \ref{long_identity} below) for certain polynomial functions on the
space $\Mat_n$ of $n\times n$ matrices; this identity involved, as coefficients, conjugation invariant functions on $\Mat_n$. 

In this paper, we argue that in constructing generalized cluster structures, identities of the kind we employed in 
\cite{GSVCR,GSVDouble} play a role similar to the one classical three-term determinantal identities do in a construction of usual cluster structures. To support this argument, we derive identity \eqref{longidXY} that is associated with a class of infinite periodic block bidiagonal staircase matrices and that generalizes \cite[Proposition 8.1]{GSVDouble}. We then present three examples in which our main identity is applied to construct an initial seed of a regular generalized cluster structure.

The paper is organized as follows. In section \ref{gen_clust}, we review the definition of generalized cluster structures. Section \ref{construct} is devoted to the proof of the main identity \eqref{longidXY} (Theorem~\ref{main}).  In the next three sections we apply \eqref{longidXY} to construct generalized cluster structures on the Drinfeld double of $GL_n$ 
(Section \ref{double}), thus providing a construction alternative to the one presented in \cite{GSVCR,GSVDouble},
on the space of periodic band matrices (Section \ref{band}), and, in Section \ref{exotic}, on $GL_6$ equipped with a particular Poisson--Lie bracket arising in the Belavin--Drinfeld classification. In the latter case, the resulting generalized cluster structure is compatible with that Poisson bracket. The last section contains the proofs of several lemmas about the properties of certain minors of a periodic staircase matrices.

\section{Generalized cluster structures}
\label{gen_clust}

Following \cite{GSVDouble}, we remind the definition of a generalized cluster structure represented by a quiver 
with multiplicities. Let $(Q,d_1,\dots,d_N)$ be
a quiver on $N$ mutable and $M$ frozen vertices with positive integer multiplicities $d_i$ at mutable vertices. 
A vertex is called {\it special\/} if its multiplicity is greater than~1. A frozen vertex is called {\it isolated\/}
if it is not connected to any other vertices. Let $\F$ be the field of rational functions in $N+M$ independent variables
with rational coefficients. There are $M$  distinguished variables corresponding to frozen vertices; 
they are denoted $x_{N+1},\dots,x_{N+M}$. The {\it coefficient group\/} is a free multiplicative abelian group of Laurent monomials in stable variables, 
and its integer group ring is $\bar\AA=\Z[x_{N+1}^{\pm1},\dots,x_{N+M}^{\pm1}]$ (we write
$x^{\pm1}$ instead of $x,x^{-1}$).

An {\em extended seed\/} (of {\em geometric type\/}) in $\F$ is a triple
$\Sigma=(\x,Q,\P)$, where $\x=(x_1,\dots,x_N, x_{N+1},\dots, x_{N+M})$ is a transcendence basis of $\F$ over the field of
fractions of  $\bar\AA$ and $\P$ is a set of $N$ {\em strings}. The $i$th string is a collection of 
monomials $p_{ir}\in\AA=\Z[x_{N+1},\dots,x_{N+M}]$, $0\le r\le d_i$, such that  
$p_{i0}=p_{id_i}=1$; it is called {\em trivial\/} if $d_i=1$, and hence both elements of the string are equal to one.
The monomials $p_{ir}$ are called {\em exchange coefficients}.

Given a seed as above, the {\em adjacent cluster\/} in direction $k$, $1\le k\le N$,
is defined by $\x'=(\x\setminus\{x_k\})\cup\{x'_k\}$,
where the new cluster variable $x'_k$ is given by the {\em generalized exchange relation}
\begin{equation}\label{exchange}
x_kx'_k=\sum_{r=0}^{d_k}p_{kr}u_{k;>}^r v_{k;>}^{[r]}u_{k;<}^{d_k-r}v_{k;<}^{[d_k-r]};
\end{equation}
here $u_{k;>}$ and $u_{k;<}$, $1\le k\le N$, are 
defined by
\begin{equation*}
u_{k;>}=\prod_{k\to i\in Q} x_i,\qquad  u_{k;<}=\prod_{i\to k \in Q}x_i,
\end{equation*}
where the products are taken over all edges between $k$ and mutable vertices,
and {\em stable $\tau$-monomials\/}
$v_{k;>}^{[r]}$ and $v_{k;<}^{[r]}$, $1\le k\le N$, $0\le r\le d_k$, defined by
\begin{equation}\label{stable}
v_{k;>}^{[r]}=\prod_{N+1\le i\le N+M}x_i^{\lfloor rb_{ki}/d_k\rfloor},\qquad
v_{k;<}^{[r]}=\prod_{N+1\le i\le N+M}x_i^{\lfloor rb_{ik}/d_k\rfloor},
\end{equation}
where $b_{ki}$ is the number of edges from $k$ to $i$ and $b_{ik}$ is the number of edges from $i$ to $k$; 
here, as usual, the product over the empty set is assumed to be
equal to~$1$. 
The right hand side of~\eqref{exchange} is called a {\it generalized exchange polynomial}.

The standard definition of the {\it quiver mutation\/} in direction $k$ is modified as follows: if both vertices $i$ and $j$
in a path $i\to k\to j$ are mutable, then this path contributes $d_k$ edges $i\to j$ to the mutated quiver $Q'$; if one of the vertices $i$ or $j$ is frozen then the path contributes $d_j$ or $d_i$ edges $i\to j$ to $Q'$. The multiplicities at the vertices do not change. Note that isolated vertices remain isolated in $Q'$.

The {\em exchange coefficient mutation\/} in direction $k$ is given by
\begin{equation}
\label{CoefMutation}
 p'_{ir}=\begin{cases}
          p_{i,d_i-r}, & \text{if $i=k$;}\\
           p_{ir}, &\text{otherwise.}
        \end{cases}
\end{equation}

Given an extended seed $\Sigma=(\x,Q,\P)$, we say that a seed
$\Sigma'=(\x',Q',\P')$ is {\em adjacent\/} to $\Sigma$ (in direction
$k$) if $\x'$, $Q'$ and $\P'$ are as above. 
Two such seeds are {\em mutation equivalent\/} if they can
be connected by a sequence of pairwise adjacent seeds. 
The set of all seeds mutation equivalent to $\Sigma$ is called the {\it generalized cluster structure\/} 
(of geometric type) in $\F$ associated with $\Sigma$ and denoted by $\GCC(\Sigma)$.

Fix a ground ring $\widehat{\AA}$ such that $\AA\subseteq\widehat\AA\subseteq\bar\AA$. The
{\it generalized upper cluster algebra\/}
$\UU(\GCC)=\UU(\GCC(\Sigma))$ is the intersection of the rings of Laurent polynomials over $\widehat{\AA}$ in cluster variables taken over all seeds in $\GCC(\Sigma)$. Let $V$ be a quasi-affine variety over $\C$, $\C(V)$ be the field of rational functions on $V$, and $\O(V)$ be the ring of regular functions on $V$. A generalized cluster structure $\GCC(\Sigma)$
in $\C(V)$ is an embedding of $\x$ into $\C(V)$ that can be extended to a field isomorphism between $\F\otimes\C$ and $\C(V)$. 
It is called {\it regular on $V$\/} if any cluster variable in any cluster belongs to $\O(V)$, and {\it complete\/} if 
$\UU(\GCC)$ tensored with $\C$ is isomorphic to $\O(V)$. The choice of the ground ring is discussed in~\cite[Section 2.1]{GSVDouble}.

\begin{remark} (i) The definition above is a particular case of a more general definition of generalized
cluster structures given in \cite{GSVDouble}.

(ii) Quivers with multiplicities differ from weighted quivers introduced in \cite{LFZ}.
\end{remark}

\section{Identity for minors of a  periodic staircase matrix}
\label{construct}

Consider a periodic block bidiagonal matrix
\begin{equation}
\label{shapeL}
L= \left [
\begin{array}{ccccccc}
\ddots &\ddots &\ddots &\ddots & & &\\
 & 0 & X & Y & 0 &  &\\
& & 0 & X & Y & 0 &\\
 & & &\ddots &\ddots &\ddots &\ddots
 \end{array}
\right ],
\end{equation}
where $X\in \Mat_n$ and $Y\in GL_n$ are matrices of the form
\begin{equation}
\label{shape}
X = \left [
\begin{array}{cc}
0_{a\times b} & \ast\\ 0 & 0
\end{array}
\right ], \qquad 
Y= \left [
\begin{array}{cc}
\ast &\ast \\
0_{(n-a)\times b} &\ast
\end{array}
\right ],
\end{equation}
with $a > b + 1\ge 1$; the entries in the submatrices of $X$ and $Y$ denoted by $\ast$ can take arbitrary complex values. 
This choice ensures that $L$ has a staircase shape. Below, is an example of a dense submatrix of $L$ for 
$n = 9$, $a = 5$, $b = 2$:
{\tiny
\begingroup
\setlength{\tabcolsep}{3pt}
\begin{center}
\begin{tabular*}{\textwidth}{cccccccccccccccccccc}
$x_{17}$ & ${\underline x_{18}}$ & ${\underline x_{19}}$ & $y_{11}$ & $y_{12}$ & $y_{13}$ & $y_{14}$ & $y_{15}$ & $y_{16}$ & 
$y_{17}$ & $y_{18}$ & $y_{19}$ & & & & & & & &\\ 
$x_{27}$ & $x_{28}$ & ${\underline x_{29}}$ & ${\underline y_{21}}$ & $y_{22}$ & $y_{23}$ & $y_{24}$ & $y_{25}$ & $y_{26}$ & 
$y_{27}$ & $y_{28}$ & $y_{29}$ & & & & & & & &\\ 
$x_{37}$ & $x_{38}$ &  $x_{39}$ & ${\underline y_{31}}$ & ${\underline y_{32}}$ & $y_{33}$ & $y_{34}$ & $y_{35}$ & $y_{36}$ & 
$y_{37}$ & $y_{38}$ & $y_{39}$ & & & & & & & &\\ 
$x_{47}$ & $x_{48}$ & $x_{49}$ &   $y_{41}$ & ${\underline y_{42}}$ & ${\underline y_{43}}$ & $y_{44}$ & $y_{45}$ & $y_{46}$ & $y_{47}$ & $y_{48}$ & $y_{49}$ & & & & & & & &\\ 
$x_{57}$ & $x_{58}$ & $x_{59}$ & $y_{51}$ & $y_{52}$ & ${\underline y_{53}}$ & ${\underline y_{54}}$ & $y_{55}$ & $y_{56}$ & 
$y_{57}$ & $y_{58}$ & $y_{59}$ & & & & & & & &\\ 
           &            &   & &  & $y_{63}$ & ${\underline y_{64}}$ & ${\underline y_{65}}$ & $y_{66}$ & $y_{67}$ & $y_{68}$ & $y_{69}$ & & & & & & & &\\ 
           &            &   & &  & $y_{73}$ & $y_{74}$ & ${\underline y_{75}}$ & ${\underline y_{76}}$ & $y_{77}$ & $y_{78}$ & $y_{79}$ & & & & & & & &\\ 
           &            &   & &  & $y_{83}$ & $y_{84}$ & $y_{85}$ & ${\underline y_{86}}$ & ${\underline y_{87}}$ & $y_{88}$ & $y_{89}$ & & & & & & & &\\ 
           &            &   & &  & $y_{93}$ & $y_{94}$ & $y_{95}$ & $y_{96}$ & ${\underline y_{97}}$ & ${\underline y_{98}}$ & $y_{99}$ & & & & & & & &\\ 
           &            &   & &  & $x_{13}$ & $x_{14}$ & $x_{15}$ & $x_{16}$ & $x_{17}$ & ${\underline x_{18}}$ & 
					${\underline x_{19}}$ & $y_{11}$ & $y_{12}$ & $y_{13}$ & $y_{14}$ & $y_{15}$ & $y_{16}$ & $y_{17}$ & $y_{18}$ \\
           &  &   & &  & $x_{23}$ & $x_{24}$ & $x_{25}$ & $x_{26}$ & $x_{27}$ & $x_{28}$ & ${\underline x_{29}}$ & 
					${\underline y_{21}}$ & $y_{22}$ & $y_{23}$ & $y_{24}$ & $y_{25}$ & $y_{26}$ & $y_{27}$ & $y_{28}$ \\
           &  &   & &  & $x_{33}$ & $x_{34}$ & $x_{35}$ & $x_{36}$ & $x_{37}$ & $x_{38}$ & $x_{39}$ & ${\underline y_{31}}$ & 
					${\underline y_{32}}$ & $y_{33}$ & $y_{34}$  & $y_{35}$ & $y_{36}$ & $y_{37}$ & $y_{38}$ \\
           &            &            &             &            & $x_{43}$ & $x_{44}$ & $x_{45}$ & $x_{46}$ & $x_{47}$ & 
					$x_{48}$ & $x_{49}$ & $y_{41}$ & ${\underline y_{42}}$ & ${\underline y_{43}}$ & $y_{44}$ & $y_{45}$ & $y_{46}$ & 
					$y_{47}$ & $y_{48}$ \\
           &            &            &             &            & $x_{53}$ & $x_{54}$ & $x_{55}$ & $x_{56}$ & $x_{57}$ & 
					$x_{58}$ & $x_{59}$ & $y_{51}$ & $y_{52}$ & ${\underline y_{53}}$ & ${\underline y_{54}}$ & $y_{55}$ & $y_{56}$ & 
					$y_{57}$ & $y_{58}$ \\
           &            &            &             &            &            &            &             &            &            &            &            &             &            & $y_{63}$ & ${\underline y_{64}}$ & ${\underline y_{65}}$ & 
					$y_{66}$ & $y_{67}$ & $y_{68}$ \\
           &            &            &             &            &            &            &             &            &            &            &            &             &            & $y_{73}$ & $y_{74}$ & ${\underline y_{75}}$ & 
					${\underline y_{76}}$ & $y_{77}$ & $y_{78}$ \\
           &            &            &             &            &            &            &             &            &            &            &            &             &            & $y_{83}$ & $y_{84}$ & $y_{85}$ & ${\underline y_{86}}$ & 
					${\underline y_{87}}$ & $y_{88}$ \\
           &            &            &             &            &            &            &             &            &            &            &            &             &            & $y_{93}$ & $y_{94}$ & $y_{95}$ & $y_{96}$ & 
					${\underline y_{97}}$ & ${\underline y_{98}}$ \\
           &            &            &             &            &            &            &             &            &            &            &            &             &            & $x_{13}$ & $x_{14}$ & $x_{15}$ & $x_{16}$ & $x_{17}$ & 
					${\underline x_{18}}$ \\
           &            &            &             &            &            &            &             &            &            &            &            &             &            & $x_{23}$ & $x_{24}$ & $x_{25}$ & $x_{26}$ & $x_{27}$ & 
					$x_{28}$ \\
           &            &            &             &            &            &            &             &            &            &            &            &             &            & $x_{33}$ & $x_{34}$ & $x_{35}$ & $x_{36}$ & $x_{37}$ & 
					$x_{38}$ \\
           &            &            &             &            &            &            &             &            &            &            &            &             &            & $x_{43}$ & $x_{44}$ & $x_{45}$ & $x_{46}$ & $x_{47}$ & 
					$x_{48}$ \\
           &            &            &             &            &            &            &             &            &            &            &            &             &            & $x_{53}$ & $x_{54}$ & $x_{55}$ & $x_{56}$ & $x_{57}$ & 
					$x_{58}$ 
\end{tabular*}.
\end{center}
\endgroup
}
Denote $k = a-b$.
We say that a diagonal of $L$ is {\em inner\/} if when it is viewed as the main diagonal of $L$ then $L$ is not block-triangular. In the example above, there are two inner diagonals whose entries are underlined. In general, $L$ has 
$a - b -1 = k -1$ inner diagonals.
We define the {\em core\/} of $L$ as follows. Delete the first row in every block row of $L$, then in the resulting matrix pick the submatrix formed by consecutive rows and columns whose upper left entry is $y_{21}$ and whose lower right entry is $y_{ab}$ if $b>0$ and $x_{an}$ if $b=0$. This defines the core uniquely as a $((k-1)n+b)\times ((k-1)n+b)$ matrix
\begin{equation}
\label{Phi}
\Phi=\left [
\begin{array}{ccccc}
Y_{[2,n]} & & & & \\
X_{[2,n]} & Y_{[2,n]} & & & \\
 & \ddots &\ddots &  & \\
 & & X_{[2,n]} & Y_{[2,n]} & \\
  & & & X_{[2,a]} & Y_{[2,a]}^{[1,b]}
 \end{array}
\right ]
\end{equation}
(the $Y$-block in the lower right corner does not exist when $b=0$).
For our example above, the core is a $20\times 20$ matrix
{\tiny
\begingroup
\setlength{\tabcolsep}{3pt}
\begin{center}
\begin{tabular*}{\textwidth}{cccccccccccccccccccc}
${\underline y_{21}}$ & $y_{22}$ & $y_{23}$ & $y_{24}$ & $y_{25}$ & $y_{26}$ & $y_{27}$ & $y_{28}$ & $y_{29}$ & & & & & & & & & & & \\
${\underline y_{31}}$ & ${\underline y_{32}}$ & $y_{33}$ & $y_{34}$ & $y_{35}$ & $y_{36}$ & $y_{37}$ & $y_{38}$ & $y_{39}$ & & & & & & & & & & &  \\
$y_{41}$ & ${\underline y_{42}}$ & ${\underline y_{43}}$ & $y_{44}$ & $y_{45}$ & $y_{46}$ & $y_{47}$ & $y_{48}$ & $y_{49}$ & & & & & & & & & & & \\
$y_{51}$ & $y_{52}$ & ${\underline y_{53}}$ & ${\underline y_{54}}$ & $y_{55}$ & $y_{56}$ & $y_{57}$ & $y_{58}$ & $y_{59}$ & & & & & & & & & & & \\
            &           & $y_{63}$ & ${\underline y_{64}}$ & ${\underline y_{65}}$ & $y_{66}$ & $y_{67}$ & $y_{68}$ & $y_{69}$ & & & & & & & & & & & \\
           &            & $y_{73}$ & $y_{74}$ & ${\underline y_{75}}$ & ${\underline y_{76}}$ & $y_{77}$ & $y_{78}$ & $y_{79}$ & & & & & & & & & & & \\
           &            & $y_{83}$ & $y_{84}$ & $y_{85}$ & ${\underline y_{86}}$ & ${\underline y_{87}}$ & $y_{88}$ & $y_{89}$ & & & & & & & & & & &  \\
           &            & $y_{93}$ & $y_{94}$ & $y_{95}$ & $y_{96}$ & ${\underline y_{97}}$ & ${\underline y_{98}}$ & $y_{99}$ & & & & & & & & & & & \\
           &            & $x_{23}$ & $x_{24}$ & $x_{25}$ & $x_{26}$ & $x_{27}$ &  $x_{28}$ & ${\underline x_{29}}$ &  ${\underline y_{21}}$ & $y_{22}$ & $y_{23}$ & $y_{24}$ & $y_{25}$ & $y_{26}$ & $y_{27}$ & $y_{28}$ & $y_{29}$& & \\
           &            & $x_{33}$ & $x_{34}$ & $x_{35}$ & $x_{36}$ & $x_{37}$ & $x_{38}$ & $x_{39}$ & ${\underline y_{31}}$ & ${\underline y_{32}}$ & $y_{33}$ & $y_{34}$ & $y_{35}$ & $y_{36}$ & $y_{37}$ & $y_{38}$ & $y_{39}$ & & \\
           &            & $x_{43}$ & $x_{44}$ & $x_{45}$ & $x_{46}$ & $x_{47}$ & $x_{48}$ & $x_{49}$ & $y_{41}$ & ${\underline y_{42}}$ & ${\underline y_{43}}$ & $y_{44}$ & $y_{45}$ & $y_{46}$ & $y_{47}$ & $y_{48}$ & $y_{49}$ & & \\
           &            & $x_{53}$ & $x_{54}$ & $x_{55}$ & $x_{56}$ & $x_{57}$ & $x_{58}$ & $x_{59}$ & $y_{51}$ & $y_{52}$ & ${\underline y_{53}}$ & ${\underline y_{54}}$ & $y_{55}$ & $y_{56}$ & $y_{57}$ & $y_{58}$ & $y_{59}$& & \\
           &            &            &             &            &            &            &            &            &             &            & $y_{63}$ & ${\underline y_{64}}$ & ${\underline y_{65}}$ & $y_{66}$ & $y_{67}$ & $y_{68}$ & $y_{69}$ & & \\
           &            &            &             &            &            &            &             &            &            &            & $y_{73}$ & $y_{74}$ & ${\underline y_{75}}$ & ${\underline y_{76}}$ & $y_{77}$ & $y_{78}$ & $y_{79}$ & &  \\
           &            &            &             &            &            &            &             &            &            &            & $y_{83}$ & $y_{84}$ & $y_{85}$ & ${\underline y_{86}}$ & ${\underline y_{87}}$ & $y_{88}$ & $y_{89}$ & & \\
           &            &            &             &            &            &            &             &            &            &            & $y_{93}$ & $y_{94}$ & $y_{95}$ & $y_{96}$ & ${\underline y_{97}}$ & ${\underline y_{98}}$ & $y_{99}$ & & \\
           &            &            &             &            &            &            &             &            &            &            & $x_{23}$ & $x_{24}$ & $x_{25}$ & $x_{26}$ & $x_{27}$ & $x_{28}$ & ${\underline x_{29}}$ & ${\underline y_{21}}$ & $y_{22}$ \\
           &            &            &             &            &            &            &             &            &            &            & $x_{33}$ & $x_{34}$ & $x_{35}$ & $x_{36}$ & $x_{37}$ & $x_{38}$ & $x_{39}$ &${\underline y_{31}}$ & ${\underline y_{32}}$ \\
           &            &            &             &            &            &            &             &            &            &            &  $x_{43}$ & $x_{44}$ & $x_{45}$ & $x_{46}$ & $x_{47}$ & $x_{48}$ & $x_{49}$&$y_{41}$ & ${\underline y_{42}}$ \\
           &            &            &             &            &            &            &             &            &            &            &  $x_{53}$ & $x_{54}$ & $x_{55}$ & $x_{56}$ & $x_{57}$ & $x_{58}$ & $x_{59}$&$y_{51}$ & $y_{52}$ 
\end{tabular*}
\end{center}
\endgroup
}

Consider $n$-element segments of inner diagonals in $L$ obtained as intersections with a single block row. The main diagonal of 
$\Phi$ is made of the entries $2$ to $n$ of such segment belonging to the uppermost inner diagonal, followed by the entries $2$ to $n$ of the segment belonging to the next inner diagonal from the top, and so on, followed by entries $2$ to $n$ of the segment belonging to the lowest inner diagonal, followed by entries $x_{2,n-k+2},\ldots, x_{kn}, y_{k+1,1},\ldots ,y_{ab}$. Consequently,
each matrix entry that lies on an inner diagonal of $L$ and does not belong to the first row of $X$ or $Y$, enters the main diagonal of $\Phi$ exactly once.

For $i=1,\ldots,(k-1)n + b$ let
\begin{equation}
\label{phis}
\phhi_i =\det \Phi_{[i,(k-1)n + b]}^{[i,(k-1)n + b]}
\end{equation}
be the trailing minors of $\Phi$. In particular, $\phhi_1 =\det \Phi$ is called the {\it core determinant}.
Additionally, we set $\phhi_{(k-1)n+b+1}=1$.

We consider $\phhi_i$ as polynomials in the entries of $X$ and $Y$ indicated by $\ast$ in \eqref{shape}. 
Our goal is to establish a generalized exchange relation for $\phhi_1$ that involves the coefficients of the characteristic 
polynomial $\det(\lambda X+\mu Y)$.

Denote
\[
X Y^{-1} = \left [\begin{array}{c} W\\ 0_{(n-a)\times n} \end{array} \right ], \quad
W^{[1,a]} =  \left [\begin{array}{cc} W_{11} & W_{12}\\ W_{21} & W_{22} \end{array} \right ],\quad 
Y_{[1,a]}^{[1,b]}= \left [\begin{array}{c} Y_1\\Y_2 \end{array} \right ],
\]
where $W$ is $a\times n$, $W_{11}$ is $k\times k$, $W_{22}$ is $b\times b$, $Y_{1}$ is $k\times b$ and  $Y_{2}$ is $b\times b$. 
Let
\[
U = W_{11} - Y_1Y_2^{-1} W_{21}.
\]
If $b=0$, we set $U=W_{11}$ and use a standard convention $\det Y_2=1$.
 
\begin{lemma} For any $\lambda, \mu$,
\label{detXY}
\[
\det \left ( \lambda Y + \mu X\right ) = \lambda^{n-k} \det Y \det (\lambda \one_k + \mu U).
\]
\end{lemma}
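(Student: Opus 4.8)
The plan is to peel $Y$ off on the right and then exploit the sparsity pattern \eqref{shape}. Since $Y\in GL_n$, I factor $\lambda Y+\mu X=(\lambda\one_n+\mu XY^{-1})Y$, so that $\det(\lambda Y+\mu X)=\det Y\cdot\det(\lambda\one_n+\mu XY^{-1})$. Because the bottom $n-a$ rows of $X$ vanish, so do those of $XY^{-1}=\left[\begin{smallmatrix}W\\0\end{smallmatrix}\right]$; hence $\lambda\one_n+\mu XY^{-1}$ is block upper triangular with diagonal blocks $\lambda\one_a+\mu W^{[1,a]}$ and $\lambda\one_{n-a}$. This gives
\[
\det(\lambda Y+\mu X)=\lambda^{n-a}\det Y\cdot\det\!\left(\lambda\one_a+\mu W^{[1,a]}\right),
\]
and since $a=k+b$ it remains to show $\det(\lambda\one_a+\mu W^{[1,a]})=\lambda^{b}\det(\lambda\one_k+\mu U)$.

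The crux is a hidden linear relation among the blocks of $W^{[1,a]}$, forced by the vanishing of the first $b$ columns of $X$. Indeed $WY$ is the top $a$ rows of $X$, whose first $b$ columns are zero by \eqref{shape}; as the first $b$ columns of $Y$ are supported in rows $1,\dots,a$ (again by \eqref{shape}), this says $W^{[1,a]}\left[\begin{smallmatrix}Y_1\\Y_2\end{smallmatrix}\right]=0$. Reading this blockwise gives $W_{11}Y_1+W_{12}Y_2=0$ and $W_{21}Y_1+W_{22}Y_2=0$, i.e.
\[
W_{12}=-W_{11}Y_1Y_2^{-1},\qquad W_{22}=-W_{21}Y_1Y_2^{-1},
\]
where $Y_2$ is invertible exactly in the regime in which $U$ is defined. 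Identifying this relation is the step I expect to be the main obstacle; everything after it is formal.

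To finish, set $P=Y_1Y_2^{-1}$ and add $P$ times the first block column of $\lambda\one_a+\mu W^{[1,a]}$ to its second block column. The two relations above collapse the new second block column to $\lambda\left[\begin{smallmatrix}P\\\one_b\end{smallmatrix}\right]$, so
\[
\det\!\left(\lambda\one_a+\mu W^{[1,a]}\right)=\det\begin{bmatrix}\lambda\one_k+\mu W_{11} & \lambda P\\ \mu W_{21} & \lambda\one_b\end{bmatrix}.
\]
Taking the Schur complement of the diagonal block $\lambda\one_b$ yields $\lambda^{b}\det(\lambda\one_k+\mu W_{11}-\mu P W_{21})=\lambda^{b}\det(\lambda\one_k+\mu U)$, which together with the first display and $n-a+b=n-k$ proves the claim. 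The Schur step needs $\lambda\neq0$, but both sides are polynomials in $\lambda,\mu$, so the identity holds for all $\lambda,\mu$; the degenerate case $b=0$ (where $U=W_{11}$ and the lower blocks are absent) follows directly from the first two steps.
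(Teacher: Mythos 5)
Your proof is correct and follows essentially the same route as the paper: factor out $Y$ on the right, use the vanishing bottom rows of $XY^{-1}$ to reduce to the $a\times a$ block $\lambda\one_a+\mu W^{[1,a]}$, and exploit the key relation $W^{[1,a]}\bigl[\begin{smallmatrix}Y_1\\Y_2\end{smallmatrix}\bigr]=0$ via the unipotent matrix built from $Y_1Y_2^{-1}$. The only cosmetic difference is that you use a one-sided block column operation followed by a Schur complement (hence the harmless $\lambda\neq0$ caveat), whereas the paper conjugates $W^{[1,a]}_{[1,a]}$ on both sides by that unipotent matrix, which makes the resulting matrix block triangular outright and needs no such caveat.
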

\begin{proof} Let $t=\frac \lambda \mu$, then $\det \left ( \lambda Y + \mu X\right ) = \mu^n \det \left (tY + X\right )$. 
In turn,
\begin{equation*}
\det \left ( tY + X\right ) = \det Y \det\left ( t\one_n + \left [\begin{array} {c} W\\ 0 \end{array}\right ]\right ) 
       =t^{n-a} \det Y \det \left (  t\one_a +W^{[1,a]}_{[1,a]} \right ). 
\end{equation*}			
Note that $W^{[1,a]} Y_{[1,a]}^{[1,b]}= \left ( WY \right)^{[1,b]} = X_{[1,a]}^{[1,b]}= 0$, and so
\begin{equation}
\label{aux0}
\left [\begin{array}{cc} \one_{k} & - Y_1 Y_2^{-1} \\0 & \one_{b}\end{array} \right ] W_{[1,a]}^{[1,a]} \left [\begin{array}{cc} \one_{k} & Y_1 Y_2^{-1} \\0 & \one_{b}\end{array} \right ] =
\left [\begin{array}{cc} U & 0 \\ \star & 0\end{array} \right ];
\end{equation}
here and in what follows exact expressions for submatrices denoted by $\star$ are not relevant for further discussion.
Consequently,
\[
		\det \left (tY + X\right )=	 t^{n-a} \det Y  
       \det  \left (  t\one_a + \left [\begin{array} {cc} U & 0\\ \star & 0 \end{array}\right ]\right ) \\
        =t^{n-k} \det Y \det \left (  t\one_k +U\right ), 
\end{equation*}
and the claim follows.
\end{proof} 

As an immediate corollary from Lemma \ref{detXY}, we can write 
\begin{equation}\label{detXYcor}
\det \left ( \lambda Y + \mu X\right ) = \lambda^{n-k}  \sum_{i=0}^k c_i(X,Y) \mu^i \lambda^{k-i}, 
\end{equation}
where $c_i(X,Y)$ are polynomials in the entries of $X$ and $Y$.

\begin{theorem}\label{main}
The generalized exchange relation for the core determinant $\phhi_1$ is given by
\begin{equation}
\label{longidXY}
\phhi_1 \phhi_1^*=\sum_{i=0}^k c_i(X,Y) \left ( (-1)^{n-1}\det \bar Y\phhi_{n+1}\right )^i \phhi_2 ^{k-i},
\end{equation}
where $\phhi_1^*$ is a polynomial in the entries of $X$ and $Y$ and $\bar Y=Y_{[2,n]}^{[2,n]}$.
\end{theorem}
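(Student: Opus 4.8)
The goal is to prove a generalized exchange relation $\phhi_1 \phhi_1^* = \sum_{i=0}^k c_i(X,Y)\,((-1)^{n-1}\det\bar Y\,\phhi_{n+1})^i\,\phhi_2^{k-i}$, where the $c_i$ are the coefficients in $\det(\lambda Y + \mu X) = \lambda^{n-k}\sum_i c_i \mu^i\lambda^{k-i}$ furnished by Lemma~\ref{detXY}. The right-hand side is a homogeneous-looking degree-$k$ polynomial in the two quantities $A := (-1)^{n-1}\det\bar Y\,\phhi_{n+1}$ and $B := \phhi_2$, with coefficients read off from the characteristic polynomial of the $k\times k$ matrix $U = W_{11} - Y_1 Y_2^{-1} W_{21}$. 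This strongly suggests that the entire sum should collapse, via Lemma~\ref{detXY}, into a single determinant $\det(B\,\one_k + A\,U)$ up to the factor $\det Y$, i.e. that $\sum_i c_i A^i B^{k-i} = \det(B\one_k + A U)$ times an appropriate power. So the plan is to identify $\phhi_1\phhi_1^*$ with such a structured determinant and then invoke Lemma~\ref{detXY} with the substitution $\lambda \mapsto B$, $\mu \mapsto A$.

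\textbf{The plan.} First I would set up the right determinantal object. The core $\Phi$ from \eqref{Phi} is a block lower-bidiagonal matrix built from copies of $Y_{[2,n]}$ on the diagonal and $X_{[2,n]}$ below, terminating in the truncated corner blocks $X_{[2,a]}$ and $Y_{[2,a]}^{[1,b]}$. Its trailing minors $\phhi_i$ satisfy the recursion defining them, and $\phhi_1 = \det\Phi$ is the core determinant while $\phhi_2$ is the determinant of $\Phi$ with its first row and column deleted. I would produce $\phhi_1^*$ by a Desnanot--Jacobi / Dodgson-type manipulation applied to an augmented matrix: enlarge $\Phi$ by one block row and one block column at the bottom right (adjoining the next $X_{[2,n]}, Y_{[2,n]}$ pair), so that the augmented core determinant factors. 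Concretely, I expect $\phhi_1^*$ to be the cofactor expressed through the upper-left portion of $\Phi$, and the identity to arise as the expansion of a single determinant of size $kn$ (one full block period larger than the core) along the block structure.

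\textbf{Reducing to Lemma~\ref{detXY}.} The key computational step is to show that a suitable bordered determinant built from the periodic structure equals $\det(\phhi_2\,\one_k + A\,U)$ up to the global factor $\det\bar Y$ or $\det Y$. Because $U$ is exactly the $k\times k$ matrix whose characteristic polynomial produces the $c_i$, expanding $\det(\phhi_2\one_k + AU)$ gives $\sum_i (\text{sym.\ func.\ of }U)\,A^i \phhi_2^{k-i}$, and Lemma~\ref{detXY} (with $\lambda = \phhi_2$, $\mu = A$, up to clearing the $\det Y$ and $\lambda^{n-k}$ factors) rewrites these symmetric functions as the $c_i(X,Y)$. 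The appearance of $\phhi_{n+1}$, $\det\bar Y$, and the sign $(-1)^{n-1}$ inside $A$ should all emerge from computing the cofactors of the bottom-right corner blocks of the enlarged periodic matrix: advancing by one full period of size $n$ contributes a factor $\det Y_{[2,n]}^{[2,n]} = \det\bar Y$, the trailing minor $\phhi_{n+1}$ arises as the complementary minor after removing one period, and $(-1)^{n-1}$ is the Laplace sign from moving past $n-1$ rows.

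\textbf{Main obstacle.} The hardest part will be the bookkeeping that identifies the cofactors of the augmented periodic matrix precisely with $A = (-1)^{n-1}\det\bar Y\,\phhi_{n+1}$ and $B = \phhi_2$, and that pins down $U$ rather than some other $k\times k$ Schur complement. The truncation in the corner blocks (the appearance of $X_{[2,a]}$ and $Y_{[2,a]}^{[1,b]}$, with the $b=0$ degenerate case requiring the convention $\det Y_2 = 1$) means the block-bidiagonal recursion is not uniform at the boundary, so one must check carefully that the Schur complement of the interior blocks collapses to exactly the matrix $W_{11} - Y_1 Y_2^{-1} W_{21}$ appearing in $U$; this is where the factorization $W^{[1,a]}Y_{[1,a]}^{[1,b]} = X_{[1,a]}^{[1,b]} = 0$ used in the proof of Lemma~\ref{detXY} (equation \eqref{aux0}) must be reintroduced to eliminate the $Y_1 Y_2^{-1}$ correction. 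Once the single structured determinant is correctly identified, applying Lemma~\ref{detXY} and reading off \eqref{detXYcor} finishes the proof mechanically.
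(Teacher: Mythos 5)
Your first step is sound and agrees with the paper: since Lemma~\ref{detXY} gives $\sum_{i=0}^k c_i(X,Y)\mu^i\lambda^{k-i}=\det Y\det(\lambda\one_k+\mu U)$, substituting $\lambda=\phhi_2$ and $\mu=(-1)^{n-1}\det\bar Y\,\phhi_{n+1}$ collapses the right-hand side of \eqref{longidXY} into a single determinant $\det Y\det\bigl(\phhi_2\one_k+(-1)^{n-1}\det\bar Y\,\phhi_{n+1}U\bigr)$; this is exactly the paper's passage to \eqref{longidU}. But that reduction is the easy half. The actual content of the theorem is that this determinant factors as $\phhi_1$ times a \emph{polynomial}, and here your proposed mechanism fails. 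A Desnanot--Jacobi/Dodgson condensation is intrinsically a three-term identity, so it cannot by itself produce the $(k+1)$-term relation for $k>2$; and a Laplace expansion of an augmented block-periodic determinant yields a signed sum over complementary minors, not a product $\phhi_1\phhi_1^*$. There is also a degree obstruction to your ``single determinant of size $kn$'': the entries of $\phhi_2\one_k+(-1)^{n-1}\det\bar Y\,\phhi_{n+1}U$ are themselves polynomials of degree roughly $(k-1)n$, so the object you need to factor is a polynomial in large minors of $L$, not a minor of any matrix one block-period larger than the core.

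What the paper does instead, and what is missing from your plan, is threefold. First, Lemmas~\ref{detphi1}--\ref{detphilast} express $\phhi_1$, $\phhi_2$ and $\phhi_{n+1}$ as Krylov-type determinants in $U$, namely (up to explicit factors) $\det K(U^{-1};e_1)$, $\det K_1(U^{-1};e_1,v_\gamma)$ and $\det K_2(U^{-1};e_1,v_\gamma)$; this is more than the ``bookkeeping'' you flag as the main obstacle, since it converts the three minors into objects to which a factorization theorem can be applied. Second --- and this is the key missing idea --- the factorization itself comes from Proposition~\ref{long_identity} (\cite[Proposition~8.1]{GSVDouble}): $\det\bigl(\det K_1(A;u,v)\,A-\det K_2(A;u,v)\,\one_k\bigr)=(-1)^{k(k-1)/2}\det K(A;u)\det K^*(A;u,v)$, applied with $A=U^{-1}$. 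This identity is not a determinantal condensation at all (its proof in \cite{GSVDouble} goes through diagonalization and Vandermonde determinants), and no combination of \eqref{jacobi}--\eqref{pluckpluck} replaces it; indeed the whole point of the paper, stated in the introduction, is that generalized exchange relations require an identity of this new type, the classical three-term identities being reserved for the ordinary mutable vertices in Theorems~\ref{structure} and~\ref{Band_structure}. Third, even after Proposition~\ref{long_identity} one only knows that $\phhi_1^*$ as defined by \eqref{phi_star} is rational with denominator a monomial in $\det Y$, $\det Y_2$, $\det\bar Y$; the asserted polynomiality of $\phhi_1^*$ requires the irreducibility of $\phhi_1$ (Lemma~\ref{coreirr}), a step your proposal does not address and which your ``cofactor'' description of $\phhi_1^*$ would have made unnecessary only if that construction existed, which it does not.
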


\begin{proof}
We start from expressing functions $\phhi_1$, $\phhi_2$ and $\phhi_{n+1}$ via $U$. 

\begin{lemma} \label{detphi1}
The core determinant $\phhi_1$ can be written as
\[
\phhi_1 = \varepsilon_1 \left ( \det Y \right )^{k-1} \det Y_2 \det \left [ U^{k-1}e_1 \ldots U^{2}e_1\  U e_1 \ e_1\right ],
\]
where $\varepsilon_1= (-1)^{n\frac{k(k-1)}{2}}$ and $e_1=(1,0,\dots,0)\in \C^k$.
\end{lemma}

\begin{lemma}\label{detphi2}
The minor $\phhi_2$ can be written as
\[
\phhi_2 = \varepsilon_2\left (\det Y\right )^{k-2} \det\bar Y\det Y_2\det
\left[ U^{k-2}v_\gamma\  U^{k-2}e_1 \ldots U^{2}e_1\  U e_1 \ e_1\right ],
\]
where  $\varepsilon_2=-\varepsilon_1$ and $v_\gamma= U (e_2 + \gamma e_1)$ with
\begin{equation}\label{gamma}
\gamma= \frac{\det Y^{[2,n]}_{1 \cup [3,n]}}{\det\bar Y}
\end{equation}
and $e_2=(0,1,0,\dots, 0)\in \C^k$. 
\end{lemma}

\begin{lemma}\label{detphilast}
The minor $\phhi_{n+1}$ can be written as
\[
\phhi_{n+1} = \varepsilon_{n+1}\left (\det Y\right )^{k-2} \det Y_2\det
\left[U^{k-2}e_1 U^{k-3}v_\gamma\  U^{k-3}e_1 \ldots U^{2}e_1\  U e_1 \ e_1\right ],
\]
where  $\varepsilon_{n+1}=(-1)^{n(k-2)(k-3)/2}$ and $v_\gamma$ is the same as in Lemma \ref{detphi2}.
\end{lemma}

Proofs of Lemmas~\ref{detphi1}--\ref{detphilast} are given in Section~\ref{detlemmas}.

Now we can invoke a result proven in \cite[Proposition 8.1]{GSVDouble}.

\begin{proposition}
\label{long_identity}
Let $A$ be a complex $k\times k$ matrix. For  $u,v\in \C^k$, define matrices
\begin{gather*}\nonumber
 K(A;u)=\left[ u \; A u \; A^2 u \dots  A^{k-1} u\right],\\ 
 K_1(A;u,v)=\left[ v \; u  \; A u \dots A^{k-2} u\right],\;\; 
 K_2(A;u,v)=\left[A v \; u  \;A u  \dots  A^{k-2} u\right].
\end{gather*}
In addition, let $w$ be the last row of the classical adjoint of $ K_1(A;u,v)$, so that
$w  K_1(A;u,v) = \left (\det  K_1(A;u,v) \right )e_k^T$. Define
$ K^*(A;u,v)$ to be the matrix with rows $w, w A, \ldots, w A^{k-1}$. Then
\begin{equation}
\label{longid}
\det\Big(\det K_1(A;u,v) A -  \det K_2(A;u,v) \one_k \Big ) = (-1)^{\frac{k(k-1)}{2}} \det K(A;u) \det K^*(A;u,v).
\end{equation}
\end{proposition}

We will make use of the following properties of matrices $K$ and $K^*$.

\begin{lemma}\label{gencop}
{\rm (i)} For any $\gamma\in\C$ there exists an invertible matrix $A$ such that
$\det K(A; e_1)=0$, but $\det K^*(A; e_1, A^{-1}(e_2+\gamma e_1))\ne 0$.

{\rm (ii)} Moreover, $A$ can be chosen in such a way that all principal leading minors of $A$ do not vanish.
\end{lemma}

The proof of the Lemma is given in Section~\ref{varcop}.

Using notation introduced in Proposition \ref{long_identity}, we can re-write the claims of 
Lemmas \ref{detphi1}--\ref{detphilast} as
\begin{align*}
\det  K(U^{-1};e_1)&= \varepsilon_1 \left ( \det Y \right )^{1-k}   \left (\det Y_2 \right )^{-1}  \left ( \det U \right )^{1-k} \phhi_1,\\
\det  K_1(U^{-1};e_1,v_\gamma)&= \varepsilon_2 \left ( \det Y \right )^{2-k}  \left ( \det\bar Y\right )^{-1}  
\left (\det Y_2 \right )^{-1}  \left ( \det U \right )^{2-k} \phhi_2,\\
\det  K_2(U^{-1};e_1,v_\gamma)&= -\varepsilon_{n+1} \left ( \det Y \right )^{2-k}   \left (\det Y_2 \right )^{-1}  
\left ( \det U \right )^{2-k} \phhi_{n+1}.
\end{align*}
Consequently, the matrix in the left hand side of \eqref{longid} equals
\[
\left ( \det Y \right )^{2-k}  \left ( \det\bar Y\right )^{-1}  \left (\det Y_2 \right )^{-1} \left ( \det U \right )^{2-k} 
\left (\varepsilon_2\phhi_2 \one_k +  \varepsilon_{n+1} \det\bar Y   \phhi_{n+1} U\right )U^{-1},
\]
and \eqref{longid}  becomes
\begin{equation}
\label{longidU}
\begin{split}
\det &\left ( \varepsilon_2\phhi_2 \one_k +  \varepsilon_{n+1} \det\bar Y   \phhi_{n+1} U\right ) \\
       & = (-1)^{\frac{k(k-1)}{2}} \varepsilon_1 
			\phhi_1 \frac {\det K^*(U^{-1};e_1,v)}{\det Y} c_k(X,Y)^{(k-1)(k-2)} 
			\left (\det Y_2 \right )^{k-1}  \left ( \det\bar Y\right )^{k},
\end{split}
\end{equation}
since $c_k(X,Y)=\det Y\det U$.

Using Lemma \ref{detXY} and equations \eqref{detXYcor}, \eqref{longidU} we get \eqref{longidXY} with 
\begin{equation}
\label{phi_star}
 \phhi_1^* = (-1)^{\frac{k(k-1)}{2}} \varepsilon_1\varepsilon_2^k\det K^*(U^{-1}; e_1,v_\gamma) c_k(X,Y)^{(k-1)(k-2)}  
\left ( \det Y_2\right )^{k-1}\left ( \det \bar Y\right )^k. 
\end{equation}

Note that 
$\det K^*(U^{-1}; e_1,v_\gamma)$  is a rational function of $X, Y$ whose denominator can contain only powers of $\det Y$,  
$\det Y_2$ and $\det \bar Y$. It remains to establish that $\phhi_1^*$ is a polynomial function of $X$ and $Y$. 
By \eqref{longidXY}, this fact is an immediate corollary of the following statement.

\begin{lemma}\label{coreirr} 
The core determinant $\phhi_1=\phhi_1(X,Y)$ is an irreducible polynomial in the entries of $X$ and $Y$.
\end{lemma}

The proof of the Lemma is given in Section~\ref{varirr}.
\end{proof}

\section{Example 1: A generalized  cluster structure on the Drinfeld double of $GL_n$}
\label{double}

In \cite{GSVCR, GSVDouble} we presented a generalized cluster structure  on the standard Drinfeld double 
$D(GL_n)=GL_n\times GL_n$ and studied its properties. In this section, we explain how the construction of 
Section \ref{construct} can be applied to select a different initial seed for a generalized cluster structure on $D(GL_n)$.

In this case $X$ and $Y$ in \eqref{shapeL} are arbitrary $n\times n$ matrices, and hence $b=0$ and $a=k=n$. Consequently, the core $\Phi$
is an $N\times N$  matrix 
\[
\Phi=\Phi\left ( X, Y\right)=\left (
\begin{array}{cccc}
Y_{[2,n]} & & &  \\
X_{[2,n]} & Y_{[2,n]} & &  \\
 & \ddots &\ddots &   \\
 & & X_{[2,n]} & Y_{[2,n]}  \\
  & & & X_{[2,n]} 
 \end{array}
\right )
\]
with $N=(n-1)n$ and $\phhi_i=\det\Phi_{[i,N]}^{[i,N]}$. 
Further, we have $U=W=X Y^{-1}$ and
$\det \left ( \lambda Y + \mu X\right ) = \sum_{i=0}^n c_i(X,Y) \mu^i \lambda^{n-i}$.

Following \cite{GSVDouble}, we define $g_{ij}=\det X_{[i,n]}^{[j,j+n-i]}$ for $1\le j\le i\le n$, 
and, $h_{ij}=\det Y_{[i,i+n-j]}^{[j,n]}$ for $1\le i\le j\le n$; note that $\phhi_i=g_{i-N+n-1,i-N+n-1}$ for $i>N-n+1$,
and that $h_{22}=\bar Y$.
The family ${\FFF_n}$ of $2n^2$ functions in the ring of regular functions on $D(GL_n)$ is defined as
\[
\FFF_n=\left\{\{\phhi_i\}_{i=1}^{N-n+1};\ \{g_{ij}\}_{1\le j\le i\le n};\ \{h_{ij}\}_{1\le i\le j\le n};\ 
\{\tilde c_i\}_{i=1}^{n-1}\right\}
\]
with $\tilde c_i(X,Y)=(-1)^{i(n-1)}c_i(X,Y)$ for $1\le i\le n-1$.

The corresponding quiver $Q_n$ is defined below and illustrated, for the $n=4$ case, in Figure \ref{example_quiver}. 
It has $2n^2$ vertices corresponding to the functions in $\FFF_n$. The $n-1$ vertices corresponding to 
$\tilde c_i(X,Y)$, $1\le i\le n-1$, are isolated; they are not shown. There are $2n$ frozen vertices corresponding to 
$g_{i1}$, $1\le i\le n$, and $h_{1j}$, $1\le j\le n$;  they are shown as squares in the figure below.
All vertices except for one are arranged into a $(2n-1) \times n$ grid; we will 
refer to vertices of the grid using their position in the grid numbered top to bottom and left to right.
The edges of $Q_n$ are $(i,j) \to (i+1,j+1)$ for $i=1,\dots, 2n-2$, $j=1,\ldots, n-1$, $(i,j) \to (i,j-1)$ and 
$(i,j) \to (i-1,j)$
for $i=2,\dots, 2n-1$, $j=2,\ldots, n$, and $(i,1)\to (i-1,1)$ for $i=2,\dots,n$. Additionally, there is an oriented path
\[
(n+1,n)\to (3,1)\to (n+2,n)\to (4,1)\to\cdots(n,1)\to(2n-1,n). 
\]
The edges in this path are depicted as dashed in Figure \ref{example_quiver}. The vertex $(2,1)$ is special;
it is shown as a hexagon in the figure. The last remaining 
vertex of $Q_n$ is placed to the left of the special vertex and there is an edge pointing from the former one to the latter.

Functions $h_{ij}$ are attached to the vertices $(i,j)$, $1\le i\le j\le n$, and all vertices in the upper row of $Q_n$
are frozen. Functions $g_{ij}$ are attached to the vertices $(n+i-1,j)$, $1\le j\le i\le n$, $(i,j)\ne(1,1)$, and all such
vertices in the first column are frozen. The function $g_{11}$ is attached to the vertex to the left of the special one, and
this vertex is frozen. Functions $\phhi_{kn+i}$ are attached to the vertices $(i+k+1,i)$ for $1\le i\le n$, $0\le k\le n-3$;
the function $\phhi_{N-n+1}$ is attached to the vertex $(n,1)$. All these vertices are mutable. The set of strings $\P_{n}$ contains a unique nontrivial string $(1,\tilde c_1(X,Y),\dots,\tilde c_{n-1}(X,Y),1)$ corresponding to the unique 
special vertex.

\begin{figure}[ht]
\begin{center}
\includegraphics[width=8cm]{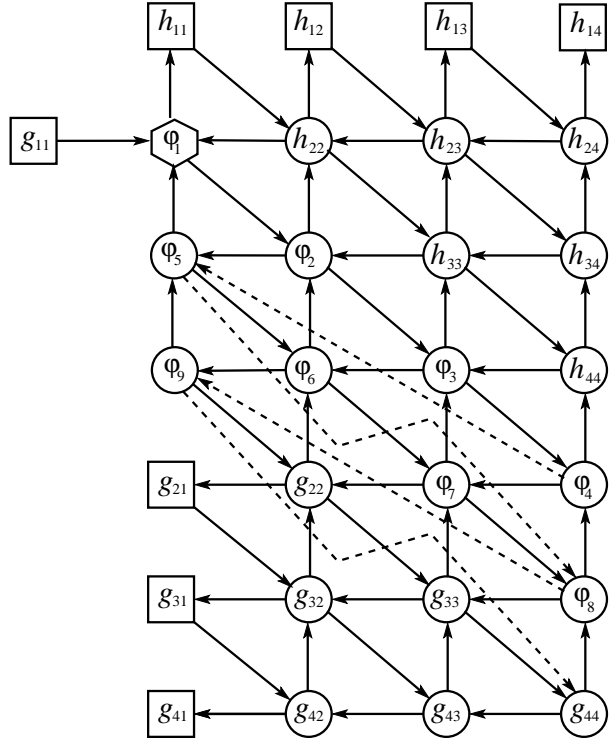}
\end{center}
\caption{Quiver $Q_4$}
\label{example_quiver}
\end{figure}

\begin{theorem}
\label{structure}
The extended seed $\Sigma_n=(\FFF_n,Q_n,\P_n)$ defines 
a regular generalized cluster structure on $D(GL_n)$.
\end{theorem}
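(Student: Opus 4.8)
The statement contains two assertions. First, $\Sigma_n=(\FFF_n,Q_n,\P_n)$ must be a legitimate extended seed: the $2n^2$ functions in $\FFF_n$ have to be algebraically independent \emph{and} generate $\C(D(GL_n))$, so that the embedding extends to a field isomorphism between $\F\otimes\C$ and $\C(D(GL_n))$. Second, the resulting structure must be regular, i.e. every cluster variable obtained by an arbitrary sequence of mutations must lie in $\O(D(GL_n))=\C[x_{ij},y_{ij}][(\det X\det Y)^{-1}]$. Since $|\FFF_n|=(n-1)^2+n(n+1)+(n-1)=2n^2=\dim D(GL_n)$, the first assertion amounts to showing that the rational map $D(GL_n)\dashrightarrow\C^{2n^2}$ defined by $\FFF_n$ is birational. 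The plan is to verify algebraic independence through a generically nonsingular Jacobian and birationality by reconstructing a generic pair $(X,Y)$ from the values of the trailing minors $g_{ij}$, the minors $h_{ij}$, the core determinants $\phhi_i$, and the coefficients $\tilde c_i$; alternatively, this first assertion follows for free once $\Sigma_n$ is placed in the mutation class of a known seed, since mutations preserve the transcendence-basis property.

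The observation driving the whole argument is that the special vertex $(2,1)$ carries the core determinant $\phhi_1$, and that the generalized exchange relation read off from $Q_n$ and the unique nontrivial string $(1,\tilde c_1,\dots,\tilde c_{n-1},1)$ is precisely the main identity \eqref{longidXY} specialized to $k=n$, $b=0$ (so that $\det Y_2=1$). In this matching, the two monomial factors $u_{k;>},u_{k;<}$ of \eqref{exchange} correspond to the mutable neighbors $\phhi_{n+1}$ and $\phhi_2$, the stable $\tau$-monomials absorb the frozen variable $\det\bar Y=h_{22}$, and the boundary coefficients $c_0=\det Y=h_{11}$ and $c_n=\det X=g_{11}$ are absorbed into the frozen monomials forced by $p_{n0}=p_{nn}=1$, leaving exactly $\tilde c_1,\dots,\tilde c_{n-1}$ as the interior of the string. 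Theorem~\ref{main}, together with the irreducibility assertion of Lemma~\ref{coreirr}, then guarantees that the once-mutated variable $\phhi_1^*$ is a genuine polynomial, hence regular; this is the single place where the generalized (rather than ordinary binomial) exchange relation is indispensable. At every ordinary vertex the exchange relation among the minors $g_{ij}$, $h_{ij}$, $\phhi_i$ is an ordinary three-term relation of Desnanot--Jacobi / short-Pl\"ucker type, so each such one-step mutation is manifestly regular as well.

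With one-step regularity established, I would obtain global regularity by one of two routes. The first is a \emph{mutation-equivalence} route: exhibit an explicit sequence of mutations transforming the initial seed of \cite{GSVDouble} into $\Sigma_n$, tracking the quiver, the multiplicities, the propagation of the single special vertex, and the string; regularity (and indeed the birationality needed in the first assertion) is then inherited from \cite{GSVDouble}. The second is a \emph{direct} route: $\O(D(GL_n))$ is normal, every cluster variable lies in $\UU(\GCC(\Sigma_n))$ by the generalized Laurent phenomenon, and once one checks that the initial functions together with their one-step mutants are pairwise coprime and cut out divisors whose pairwise intersections have codimension two, every element of the generalized upper cluster algebra—in particular every cluster variable—is forced to be regular.

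I expect the global step to be the main obstacle, precisely because for generalized cluster structures regularity does not propagate automatically from a single mutation. Verifying one-step regularity is essentially bookkeeping layered on top of the classical determinantal identities and Theorem~\ref{main}, but controlling \emph{all} clusters is not. The cleanest resolution is the mutation-equivalence route, whose difficulty is entirely combinatorial: producing the mutation sequence from the \cite{GSVDouble} seed to $\Sigma_n$ and checking at each step that the quiver edges, the lone special vertex, and the string $(1,\tilde c_1,\dots,\tilde c_{n-1},1)$ transform correctly so that the endpoint is exactly $(\FFF_n,Q_n,\P_n)$. The coprimality and codimension-two verifications required by the direct route are the alternative source of difficulty; they in turn reduce to understanding the vanishing loci of the minors $g_{ij},h_{ij},\phhi_i$ and of $\phhi_1^*$, the last of these again resting on the irreducibility established in Lemma~\ref{coreirr}.
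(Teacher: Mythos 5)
Your ``direct'' route is, in substance, the paper's actual proof. The paper matches \eqref{longidXY} (with $k=n$, $b=0$) to the generalized exchange relation at the special vertex, checks one-step regularity at every other mutable vertex via the identities \eqref{jacobi}--\eqref{pluckpluck}, and then, rather than redoing any normality or codimension-two argument, invokes the ready-made criterion of \cite[Proposition 2.3]{GSVDouble}: once the initial functions and all one-step mutants are regular, any two functions in $\FFF_n$ are coprime, and each non-frozen $f$ is coprime with its mutant $f^*$, the whole structure is regular. The coprimality inputs are supplied by Lemma~\ref{fnirr} (irreducibility of \emph{all} functions in $\FFF_n$, not only of the core determinant) and Lemma~\ref{fncop}; your proposal correctly anticipates this but ties the divisor-theoretic work too narrowly to Lemma~\ref{coreirr}. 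The Jacobian/birationality verification you front-load is not carried out as a separate step in the paper.

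The genuine problem is that the route you declare ``cleanest'' --- exhibiting a mutation sequence from the initial seed of \cite{GSVDouble} to $\Sigma_n$ and inheriting regularity --- is precisely what the paper tells you not to expect: the remark following Theorem~\ref{structure} states that, although the frozen variables of the two seeds coincide, there is evidence suggesting that $\Sigma_n$ is \emph{not} mutation equivalent to the seed $\widetilde\Sigma_n$ of \cite{GSVDouble}. So your primary plan would in all likelihood fail, and the direct route is not a fallback but the only viable one. Two further corrections. First, $h_{22}=\det\bar Y$ sits at vertex $(2,2)$, which is mutable, so it enters the cluster monomial $u_{1;>}=h_{22}\phhi_{n+1}$; the stable $\tau$-monomials consist only of $v_{1;>}^{[n]}=h_{11}=\det Y$ and $v_{1;<}^{[n]}=g_{11}=\det X$, so nothing frozen is ``absorbed'' from the term $\det\bar Y\,\phhi_{n+1}$. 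Second, one-step regularity at the ordinary vertices is not ``manifestly'' a three-term identity: for $\phhi_i$ with $n+1\le i\le N-1$ and for $h_{ii}$, the paper must apply \eqref{pluckpluck} to perturbed matrices $\Phi(\theta)$, resp.\ $\overline\Phi(\theta)$, and extract the coefficient of a suitable power of $\theta$ from an identically vanishing polynomial; this perturbation device is a real step for which your proposal offers no substitute.
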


\begin{proof}
We start with checking that relation~\eqref{longidXY} with $k=n$ 
indeed defines a generalized exchange relation as described in~\eqref{exchange}. The degree of the exchange relation is 
$d_n=n$, exchange coefficients are given by $p_{1r}=\tilde c_r(X,Y)$ for 
$r=1,\dots,n-1$, the cluster $\tau$-monomials are $u_{1;>}=h_{22}\phhi_{n+1}$ and $u_{1;<}=\phhi_{2}$. 
The stable $\tau$-monomials are defined as follows:
\begin{equation*}
v_{1;>}^{[n]}=h_{11}=\det Y,\qquad
v_{1;>}^{[r]}=1\quad\text{for $0\le r\le n-1$},
\end{equation*}		
and
\begin{equation*}
v_{1;<}^{[n]}=g_{11}=\det X,\qquad
v_{1;<}^{[r]}=1\quad\text{for $0\le r\le n-1$}.
\end{equation*}					

Let us show that cluster transformations defined by the quiver $Q_{n}$ produce regular functions. For the special vertex this
follows from Theorem \ref{main}. For the vertices corresponding to $g_{ij}$ and $h_{ij}$ with $i\ne j$ the claim is well-known
from the study of the standard cluster structure on $GL_n$. 
For other mutable vertices  we use determinantal identities
 often utilized for this purpose
(see, e.g., \cite{GSVMem}, \cite{GSVDouble}, \cite{GSVPleth}).
The first is the Desnanot--Jacobi identity for minors of a square matrix $A$:
\begin{equation}
\label{jacobi}
\det A \det A_{\hat \alpha \hat \beta}^{\hat \gamma \hat \delta} + \det A_{\hat \alpha }^{ \hat \delta} \det A_{\hat \beta}^{ \hat \gamma}= 
\det A_{\hat \alpha }^{ \hat \gamma} \det A_{\hat \beta}^{ \hat \delta} ,
\end{equation}
where ``hatted'' subscripts and superscripts indicate deleted rows and columns, respectively. 
The second is a version of a short Pl\"ucker relation for an $m\times (m+1)$ matrix $B$:
\begin{equation}
\label{pluck}
\det B^{\hat\alpha\hat\beta}_{\hat\delta} \det B^{\hat\gamma} +
\det B^{\hat\beta\hat\gamma}_{\hat\delta} \det B^{\hat\alpha} = 
\det B^{\hat\alpha\hat\gamma}_{\hat\delta} \det B^{\hat\beta},
\end{equation}
and the third is the corollary of \eqref{pluck}:
\begin{equation}
\label{pluckpluck}
\begin{split}
&\det B_{\hat 1\hat 2}^{\hat 1\widehat m\widehat{m+1}}\det B^{\hat 1\hat 2}_{\hat 1} \det B^{\widehat{m+1}} +
\det B_{\hat 1\hat 2}^{\hat 1\hat 2\widehat{m+1}}\det B^{\widehat m\widehat{m+1}}_{\hat 1} \det B^{\hat{1}} \\
& \qquad = \det B^{\hat 1\widehat{m+1}}_{\hat 1} 
\left (
\det B_{\hat 1\hat 2}^{\hat 1\widehat m\widehat{m+1}} \det B^{\widehat{2}} - 
\det B_{\hat 1\hat 2}^{\hat 2\widehat m\widehat{m+1}}  \det B^{\hat{1}} 
\right ).
\end{split}
\end{equation}

In more detail, for functions $\phhi_i$ with $2\le i\le n-1$ we use~\eqref{pluckpluck} for the matrix 
$B=\left[ \Phi \;\; e_N^T\right]_{[i-1,N]}^{[i-1,N+1]}$. For $\phhi_n$ we use~\eqref{jacobi} for the matrix 
$A=\Phi_{[n-1,N]}^{[n-1,N]}$ with parameters $\alpha=\gamma=1$, $\beta=2$, $\delta=N-n+2$. 
For functions $\phhi_i$ with
$n+1\le i\le N-1$ we consider a perturbation $\Phi(\theta)=\Phi+\theta e_{(n-1)^2+1,(n-1)^2-1}$ of the core and
use~\eqref{pluckpluck} for the matrix $B(\theta)=\Phi(\theta)_{[i-n,N]}^{[i-n-1,N]}$ (for $i=n+1$ the range of columns 
$[0,N]$ stands for $\Phi$ prepended with the previous column of the infinite periodic matrix~\eqref{shapeL}; this column contains $X_{[2,n]}^{[n]}$ to the left of the uppermost copy of $Y_{[2,n]}$ in~\eqref{Phi}). A direct check shows that the 
identity~\eqref{pluckpluck} for $B(\theta)$ yields a polynomial
of degree~3 in $\theta$ that vanishes identically. The coefficient of this polynomial at $\theta$ is the exchange relation
we are looking for. For $\phhi_N$ we use~\eqref{pluck} for the matrix $B=\Phi_{[N-n,N]}^{[N-n-1,N]}$ with parameters
$\alpha=\delta=1$, $\beta=2$, $\gamma=n+2$. For functions $h_{ii}$ with $3\le i\le n$ we consider a perturbation
$\overline\Phi(\theta)=\left[ \Phi \;\; e_N^T\;\; e_N^T\right]+\theta e_{n-1,n+1}+\theta e_{N-1,N+1}$ and
use~\eqref{pluckpluck} for the matrix $\overline{B}(\theta)=\overline\Phi(\theta)_{[i-2,N]}^{[i-1,N+2]}$. 
A direct check shows that the identity~\eqref{pluckpluck} for $\overline{B}(\theta)$ yields a polynomial
of degree~4 in $\theta$ that vanishes identically. The coefficient of this polynomial at $\theta^2$ is the exchange relation
we are looking for. Finally, for $h_{22}$ we prepend a row $[Y_{[1]}\; 0]$ to the matrix $\overline\Phi(\theta)$ and proceed
with the obtained matrix exactly as in the previous case. 

By \cite[Proposition 2.3]{GSVDouble}, it remains to check that any two functions in $\FFF_n$ are coprime and 
that for any non-frozen $f\in\FFF_n$, 
the function $f^*$ that replaces $f$ after the mutation is coprime with $f$. 
The first claim above is an immediate corollary of the following statement.

\begin{lemma}\label{fnirr}
All functions in the family $\FFF_{n}$ are irreducible.
\end{lemma}

The proof of Lemma~\ref{fnirr} is given in Section~\ref{varirr}. The second claim above is provided by the following statement. 

\begin{lemma}\label{fncop} 
Every non-frozen $f\in\FFF_n$ does not divide the corresponding $f^*$.
\end{lemma}  

 The proof of Lemma~\ref{fncop} is given in Section~\ref{varcop}. 
\end{proof}

\begin{remark} (i) We expect that the regular generalized cluster structure described in Theorem \ref{structure}
is complete in $\O(D(GL_{n}))$ and compatible with the standard Poisson--Lie bracket on $D(GL_n)$.

(ii) In \cite{GSVDouble} we used a different initial seed $\widetilde\Sigma_n$ to define a regular complete 
generalized cluster structure $\GCC(\widetilde\Sigma_n)$ on $D(GL_n)$ compatible with the standard Poisson--Lie 
structure on $D(GL_n)$. Moreover, the sets of frozen variables for both structures coincide. However, 
there is an evidence suggesting that the initial seed
described above is not mutation equivalent to the one constructed
in \cite{GSVDouble}.
\end{remark}

Details and proofs of assertions mentioned in the above remark will be considered in a separate publication.

\section{Example 2: Generalized cluster structure on periodic band matrices.}
\label{band}

In this section we consider the case of $L$ in \eqref{shapeL} being a $(k+1)$ diagonal $n$-periodic band matrix with $k< n$.
In other words, $L$ represents a periodic difference operator. Such operators play an important role in spectral theory; they also appear as Lax operators in the theory of integrable systems, such as periodic Toda lattices and their multicomponent analogues (see, e.g. \cite{vMM}).
More recently,  periodic difference operators found applications that, in turn, proved to be related to the 
theory of  cluster algebras, in particular, in the investigation  of frieze patterns and pentagram maps and their generalizations \cite{MOST, Anton}. In this section, we will use Theorem \ref{main} to construct a generalized cluster algebra structure on the space of periodic difference operators.

We choose $Y$ in \eqref{shape} to be a lower triangular band matrix with $k+1$ non-zero diagonals 
(including the main diagonal); consequently, $X$ is an upper 
triangular with zeroes everywhere outside of $(k-1)\times (k-1)$ upper triangular block in the upper right corner. 
We assume that entries of the lowest and highest diagonals are all nonzero.
$X$ and $Y$ are now $n\times n$ matrices of the form
\begin{equation}
\label{XYband}
\begin{aligned}
X &= \left [
\begin{array}{cccccc}
0 & \cdots & 0 & a_{11} & \cdots & a_{k1}\\
0 & \cdots& 0 & 0 & a_{12} & \cdots \\
\vdots & \vdots & \vdots & \vdots & \ddots& \ddots \\
0 & \cdots & 0 & \cdots  & 0 & a_{1k}\\
0 & \cdots & 0 & \cdots & \cdots & 0\\
\vdots & \vdots & \vdots & \vdots & \vdots& \vdots \\
\end{array}
\right ],\\
Y&=\left [
\begin{array}{cccccc}
a_{k+1,1} & 0 & \cdots & \cdots& \cdots & 0\\
a_{k2} & a_{k+1,2}  & 0 & \cdots & \cdots & \cdots \\
\vdots & \vdots & \ddots & \vdots & \vdots& \vdots \\
a_{1,k+1}& a_{2,k+1}& \cdots  & a_{k+1,k+1} & 0 &\cdots\\
0  & \ddots & \ddots & \vdots & \ddots& \vdots \\
0 & \cdots & a_{1n} & a_{2n} &\cdots & a_{k+1,n}
\end{array}
\right ],
\end{aligned}
\end{equation}
and we can choose $a=k$, $b=0$. Consequently, 
\begin{equation}\label{bandU}
U= W_{11} = \left [ \begin{array}{ccc} a_{11} &\cdots & a_{k1}\\
\vdots &  \ddots & \vdots \\
0 & \cdots & a_{1k}\end{array}  \right ] \left (Y^{-1}  \right )_{[n-k+1,n]}^{[1,k]},
\end{equation}
and hence 
\begin{equation}\label{banddetU}
\det U = \frac{a_{11}\cdots a_{1n}} {a_{k+1,1}\cdots a_{k+1,n}},\qquad
c_k(X,Y) = a_{11}\cdots a_{1n}.
\end{equation}
Furthermore, $\gamma$ in \eqref{gamma} is equal to $0$, and therefore $v_\gamma= U e_2$.

The core $\Phi$ is a reducible $(k-1)n\times(k-1)n$ matrix, 
and for $i=1,\ldots, (k-1)(n-1)$ we have  $\phhi_i = \tilde\phhi_i a_{12}\cdots a_{1k}$ with 
\begin{equation}
\label{tildephi}
\tilde\varphi_i = \det \Phi_{[i,(k-1)(n-1)]}^{[i,(k-1)(n-1)]}.
\end{equation}

Relation \eqref{longidXY} can be rewritten as
\begin{equation}
\label{longidXYtri} 
\tilde\phhi_1 \phhi_1^*=
\left (a_{12}\cdots a_{1k} \right )^{k-1}\sum_{i=0}^k c_i(X,Y) \left ( (-1)^{n-1} \det \bar Y  \tilde\phhi_{n+1}\right )^i \tilde\phhi_2^{k-i},
\end{equation}
for $k>2$ and as 
\begin{equation}
\label{longidXYtri2}
\tilde\phhi_1 \phhi_1^*=
c_0(X,Y) \tilde\phhi_2^{2}a_{12}+(-1)^{n-1} c_1(X,Y)  \det \bar Y \tilde\phhi_2 +  
c_2(X,Y) \left ( \det \bar Y\right )^2a_{12}^{-1},
\end{equation}
for $k=2$, since in this case $\phhi_{n+1}=\phhi_{(k-1)n+b+1}=1$ according to the convention introduced in 
Section \ref{construct}. 
 In both cases,  $\phhi_1^*$ is a polynomial function in matrix entries of $X, Y$, according to Theorem \ref{main}. Since 
$c_0(X,Y)=\det Y =a_{k+1,1} \det\bar Y $, the right hand side of \eqref{longidXYtri} is divisible by 
$\det \bar Y=a_{k+1,2}\cdots a_{k+1,n}$. On the other hand, it is easy to see that $\tilde\phhi_1$ is not divisible by 
$a_{1 i}$, $a_{k+1,i}$ for $i=2,\ldots, n$. This means that for $k > 2$,
\[
 \phhi_1^* = \left (a_{12}\cdots a_{1k} \right )^{k-1} \det \bar Y \tilde \varphi_1^*,
\]
where $\tilde\varphi_1^*$ is a polynomial function in matrix entries of $X$ and $Y$. Thus, \eqref{longidXYtri} becomes
\begin{equation}
\label{longidXYtritilde}
\tilde\phhi_1\tilde\phhi_1^*=a_{k+1,1}\tilde\phhi_2 ^{k}+
\sum_{i=1}^k \tilde c_i(X,Y) (\det \bar Y)^{i-1} \tilde\phhi_{n+1}^i \tilde\phhi_2 ^{k-i},
\end{equation}
where  $\tilde c_i(X,Y)=(-1)^{i(n-1)}c_i(X,Y)$ for $1\le i\le k$. In what follows, it will be convenient to introduce
$\tilde a_{11}=(-1)^{k(n-1)}a_{11}$, so that $\tilde c_k(X,Y)=\tilde a_{11}a_{12}\cdots a_{1n}$.

Similarly, for $k=2$, $\phhi_1^*=\tilde\phhi_1^*\det \bar Y$ where $\tilde\phhi_1^*$ is a polynomial function
in matrix entries of $X$ and $Y$, and \eqref{longidXYtri2} becomes
\begin{equation}
\label{longidXYtri21}
\tilde\phhi_1 \tilde\phhi_1^*=
a_{31} a_{12} \tilde\phhi_2^2 + \tilde c_1(X,Y)\tilde\phhi_2 + \tilde c_2(X,Y) \det \bar Y
\end{equation}
with $\tilde c_1(X,Y)= (-1)^{n-1}c_1(X,Y)$ and  
$\tilde c_2(X,Y)= c_2(X,Y)/a_{12} = a_{11}a_{13}\cdots a_{1n}$.

For $k< n$, denote by $\L_{kn}$ the space of periodic difference operators represented by
$n$-periodic $(k+1)$-diagonal matrices with all entries of the lowest and the highest diagonals nonzero.
A generalized cluster structure in the space of regular functions on $\L_{kn}$ is defined by the following data.

Consider the family $\FFF_{kn}$ of functions  on $\L_{kn}$: 
\[
\FFF_{kn}=\left\{\{\tilde\phhi_{i}\}_{i=1}^{(k-1)(n-1)};\ \tilde a_{11};\ \{a_{1i}\}_{i=2}^n;\ \{a_{k+1,i}\}_{i=1}^n;\
\{\tilde c_i(X,Y)\}_{i=1}^{k-1}\right\}.
\]

Let $Q_{kn}$ be the quiver with $(k+1)n$ vertices, of which $k-1$ vertices are isolated and are not shown in the figure below, 
$(k+1)(n-1)$ are arranged in an $(n-1)\times (k+1)$ grid and denoted
$(i,j)$, $1\le i\le n-1$, $1\le j\le k+1$, and the remaining two are placed on top of the leftmost and the rightmost 
columns in the grid and denoted $(0,1)$ and $(0,k+1)$, respectively.  All vertices in the leftmost and in the 
rightmost columns are frozen. The vertex $(1,k)$ is special, and its multiplicity equals $k$. All other vertices are regular mutable vertices.

The edge set of $Q_{kn}$
consists of the edges $(i,j)\to (i+1,j)$ for $i=1,\ldots, n-2$, $j= 2,\ldots, k$; 
$(i,j)\to (i,j-1)$ for $i=1,\ldots, n-1$, $j= 2,\ldots, k$, $(i,j)\ne (1,k)$; 
$(i+1,j)\to (i,j+1)$ for $i=1,\ldots, n-2$, $j= 2,\ldots, k$, shown by solid lines. 
In addition, there are edges  $(n-1,3)\to (1,2)$, $(1,2)\to (n-1,4)$,
$(n-1,4)\to (1,3),\ldots, (1,k-1)\to(n-1,k+1)$ that form a directed path (shown by dotted lines). 
Save for this path, and the missing edge 
$(1,k)\to (1,k-1)$, mutable vertices of $Q_{kn}$ form a mesh of consistently oriented triangles

Finally, there are edges between the special vertex $(1, k)$ and frozen vertices $(i,1)$, $(i,k+1)$ for $i=0,\ldots n-1$. 
There are $k-1$ parallel edges between $(1,k)$ and $(i,k+1)$ for $i=1,\dots,n-1$, 
and one edge between $(1,k)$ and all other frozen vertices 
(including $(0,k+1)$). If $k>2$, all of these edges are directed towards $(1, k)$, and if $k=2$, the direction
of the edge between $(1,1)$ and $(1,k)$ is reversed. Quiver $Q_{47}$ is shown in Figure~\ref{Qkn}. 

\begin{figure}[ht]
\begin{center}
\includegraphics[width=10cm]{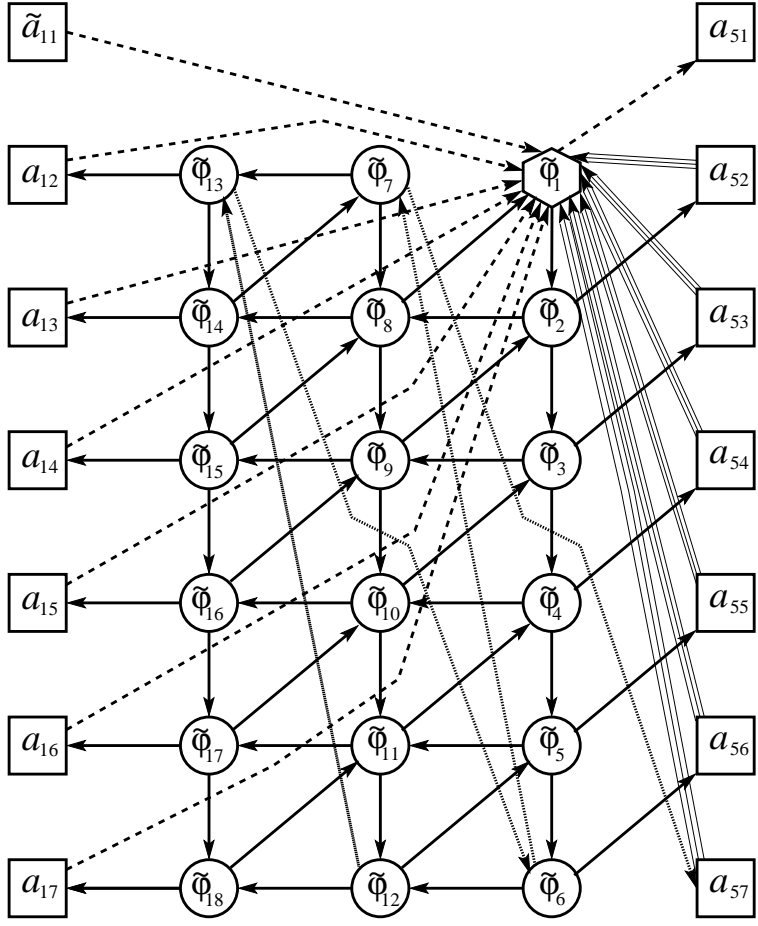}
\end{center}
\caption{Quiver $Q_{47}$}
\label{Qkn}
\end{figure}

We attach functions $\tilde a_{11}, a_{12}, \ldots, a_{1n}$, in a top to bottom order, to the vertices of the leftmost 
column in $Q_{kn}$, and functions $a_{k+1,1},\dots, a_{k+1,n}$, in the same order, to the vertices of the rightmost column
in $Q_{kn}$. Functions $\tilde\phhi_{i}$ are attached, in a 
top to bottom, right to left order, to the remaining vertices of $Q_{kn}$, starting with $\tilde\phhi_{1}$ attached to the special vertex $(1,k)$. The set of strings $\P_{kn}$ contains a unique nontrivial string 
$(1,\tilde c_1(X,Y),\dots,\tilde c_{k-1}(X,Y),1)$ corresponding to the unique special vertex.

\begin{theorem}
\label{Band_structure}
The extended seed $\Sigma_{kn}=(\FFF_{kn}, Q_{kn},\P_{kn)}$ defines a regular generalized cluster structure  
$\GCC(\Sigma_{kn})$ on $\L_{kn}$. 
\end{theorem}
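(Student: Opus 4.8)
The plan is to follow the three-step scheme of the proof of Theorem~\ref{structure}: identify the generalized exchange relation at the unique special vertex, verify that the ordinary exchanges at the remaining mutable vertices produce regular functions, and then apply \cite[Proposition~2.3]{GSVDouble} to reduce everything else to irreducibility and coprimality of the functions in $\FFF_{kn}$. All initial functions are manifestly regular on $\L_{kn}$: the $\tilde\phhi_i$ are reduced minors of the core, the $\tilde c_i(X,Y)$ are polynomials in the entries, and $\tilde a_{11},a_{1i},a_{k+1,i}$ are coordinate functions.

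First I would check that the generalized exchange relation encoded by $(Q_{kn},\P_{kn})$ at the special vertex $(1,k)$ is precisely \eqref{longidXYtritilde} when $k>2$ and \eqref{longidXYtri21} when $k=2$. Reading off the quiver data, the multiplicity is $d=k$, the unique nontrivial string supplies the exchange coefficients $p_{1r}=\tilde c_r(X,Y)$ for $1\le r\le k-1$, and the two mutable neighbors of $(1,k)$ appearing in \eqref{longidXYtritilde} give the cluster $\tau$-monomials $u_{1;>}=\tilde\phhi_{n+1}$ and $u_{1;<}=\tilde\phhi_2$ (for $k=2$ the term $\tilde\phhi_{n+1}$ disappears, in agreement with $\phhi_{n+1}=1$). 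It then remains to verify that the stable $\tau$-monomials $v_{1;>}^{[r]},v_{1;<}^{[r]}$ computed from \eqref{stable} through the edge multiplicities between $(1,k)$ and the frozen vertices reproduce the remaining monomials in $\det\bar Y=a_{k+1,2}\cdots a_{k+1,n}$, in $a_{k+1,1}$, and in $\tilde a_{11}a_{12}\cdots a_{1n}$ that appear in \eqref{longidXYtritilde} and \eqref{longidXYtri21}; the $k-1$ parallel edges to the rightmost column carry the powers of $\det\bar Y$, the single edges to $a_{k+1,1}$ and to the leftmost column account for the two boundary terms, and the reversal of one edge when $k=2$ matches the asymmetry of \eqref{longidXYtri21}. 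Regularity of the replacing variable $\tilde\phhi_1^*$ is already guaranteed: Theorem~\ref{main} together with the divisibility analysis of this section gives $\phhi_1^*=(a_{12}\cdots a_{1k})^{k-1}\det\bar Y\,\tilde\phhi_1^*$ (respectively $\phhi_1^*=\det\bar Y\,\tilde\phhi_1^*$ for $k=2$) with $\tilde\phhi_1^*$ a polynomial in the entries of $X$ and $Y$.

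Next I would establish regularity of the ordinary cluster transformations at the mutable vertices carrying $\tilde\phhi_i$, $i\ge2$. As in the proof of Theorem~\ref{structure} for the functions $\phhi_i$, these relations come from the three determinantal identities \eqref{jacobi}, \eqref{pluck} and \eqref{pluckpluck} applied to suitable trailing submatrices of the reduced core of \eqref{tildephi}, in a few cases after perturbing the core by a single off-diagonal entry and extracting the exchange relation as a coefficient of the resulting identically vanishing polynomial in the perturbation parameter. The one genuinely new bookkeeping point, compared with the double, is that the triangular band shape \eqref{XYband} makes the core reducible with the scalar factor $a_{12}\cdots a_{1k}$ split off, so this factor must be tracked through each identity in order to conclude that the reduced minors $\tilde\phhi_i$, rather than the $\phhi_i$, obey genuine exchange relations with regular right-hand sides.

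Finally, by \cite[Proposition~2.3]{GSVDouble}, it remains to show that any two functions in $\FFF_{kn}$ are coprime and that every non-frozen $f\in\FFF_{kn}$ is coprime with its replacement $f^*$. The coordinate functions $\tilde a_{11},a_{1i},a_{k+1,i}$ and the characteristic-polynomial coefficients $\tilde c_i(X,Y)$ are treated directly, while irreducibility of the reduced core minors $\tilde\phhi_i$ and the two coprimality statements follow by arguments parallel to Lemmas~\ref{coreirr}, \ref{fnirr} and \ref{fncop}. I expect this last part to be the main obstacle. Because the core is now reducible and one has divided out the factor $a_{12}\cdots a_{1k}$, the irreducibility of Lemma~\ref{coreirr} does not carry over verbatim; the task is to show that passing to $\tilde\phhi_i$ introduces no spurious factor, in particular no power of the boundary diagonal entries $a_{1i}$ or $a_{k+1,i}$. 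Here the nonvanishing hypotheses that define $\L_{kn}$, together with the explicit evaluations \eqref{bandU}--\eqref{banddetU} of $U$, $\det U$ and $c_k(X,Y)$, are what make the argument go through.
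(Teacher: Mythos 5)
Your outline follows the paper's own strategy---match \eqref{longidXYtritilde} and \eqref{longidXYtri21} against \eqref{exchange} at the special vertex, obtain the remaining exchange relations from \eqref{jacobi}, \eqref{pluck}, \eqref{pluckpluck}, and reduce the rest to irreducibility and coprimality via \cite[Proposition 2.3]{GSVDouble}---but at the two points where the band case genuinely differs from the double, your plan either copies a step that does not carry over or misidentifies the difficulty. For the ordinary mutable vertices you propose to repeat the proof of Theorem \ref{structure}: identities applied to trailing submatrices of the reduced core, ``in a few cases after perturbing the core by a single off-diagonal entry.'' That is not enough here. The neighbors of the vertex $(i,j)$, which carries $\tilde\phhi_t$ with $t=(k-j)(n-1)+i$, carry $\tilde\phhi_{t\pm(n-1)}$ and $\tilde\phhi_{t\pm n}$ as well as $\tilde\phhi_{t\pm1}$, and since Desnanot--Jacobi and Pl\"ucker identities only relate a determinant to minors obtained by deleting one or two rows and columns, one must first realize these much smaller trailing minors as such deletions inside a single matrix. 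This rests on an observation special to the band case: the reduced core is quasi-periodic with \emph{row period $n-1$ but column period $n$}, i.e., $\Phi_{[t,(k-1)(n-1)]}^{[t,(k-1)(n-1)]}$ coincides with the equal-sized submatrix of $\Phi$ whose upper-left corner sits at row $t-s(n-1)$ and column $t-sn$. Accordingly, the paper perturbs the core by $\theta\sum_{i=1}^{n-1}e_{(k-2)(n-1)+i,\,(k-2)(n-1)+i-2}$ --- a sum of $n-1$ matrix units, not a single one --- applies \eqref{pluckpluck} to a submatrix whose row and column ranges are offset by one, and extracts the coefficient of $\theta^{n-1}$ from an identity of degree $3(n-1)$ in $\theta$. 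Without the shift identification and this $(n-1)$-fold perturbation, the identities you write down do not connect the functions actually attached to adjacent vertices of $Q_{kn}$.

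The second gap is in the coprimality step at the special vertex, which is not ``parallel to Lemma \ref{fncop}.'' In the double one fixes $Y$, takes $A$ from Lemma \ref{gencop}(i), and sets $X=A^{-1}Y$; here $X$ and $Y$ are constrained to the band form \eqref{XYband}, so one cannot prescribe $U$ at will. The missing ingredient is precisely Proposition \ref{Lemma_3} of the paper: the image of the map $(X,Y)\mapsto U$ of \eqref{bandU} contains every $k\times k$ matrix with nonvanishing trailing principal minors (proved via total nonnegativity and triangular factorizations), used together with the refinement Lemma \ref{gencop}(ii), which produces $A$ whose relevant principal minors do not vanish. Your diagnosis that the main obstacle is irreducibility of the $\tilde\phhi_i$ (ruling out spurious factors of $a_{1i}$ or $a_{k+1,i}$) points at the wrong place: the paper proves Lemma \ref{fknirr} by essentially the same induction as Lemma \ref{coreirr} (plus a separate elementary argument for the $\tilde c_i$), and the genuinely new input for Lemma \ref{fkncop} is the surjectivity statement above, which your appeal to the nonvanishing hypotheses and to \eqref{bandU}--\eqref{banddetU} gestures at but does not supply.
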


\begin{proof} Similarly to the proof of Theorem~\ref{structure}, let us check first that
relation \eqref{longidXYtritilde} indeed defines a generalized cluster transformation as described in \eqref{exchange}.
The degree of the exchange relation is $d_k=k$, exchange coefficients are given by $p_{1r}=\tilde c_r(X,Y)$ for 
$r=1,\dots,k-1$, the cluster $\tau$-monomials are $u_{1;>}=\tilde\phhi_2$ and $u_{1;<}=\tilde\phhi_{n+1}$ for $k>2$
(for $k=2$,  $u_{1;<}=1$). The stable $\tau$-monomials are defined as follows:
\begin{equation*}
v_{1;>}^{[k]}=\begin{cases} a_{k+1,1}\quad\text{if $k>2$},\\
                            a_{31}a_{12}\quad\text{if $k=2$},\end{cases}\qquad
v_{1;>}^{[r]}=1\quad\text{for $0\le r\le k-1$},
\end{equation*}		
and
\begin{equation*}
\begin{aligned}
v_{1;<}^{[k]}&=\begin{cases} \tilde a_{11}a_{12}\dots a_{1n}a_{k+1,2}^{k-1}\dots a_{k+1,n}^{k-1}\quad\text{if $k>2$},\\
                            a_{11}a_{13}\dots a_{1n}a_{k+1,2}\dots a_{k+1,n} \quad\text{if $k=2$},\end{cases}\\
v_{1;<}^{[r]}&=a_{k+1,2}^{r-1}\dots a_{k+1,n}^{r-1}\quad\text{for $1\le r\le k-1$},\\
v_{1;<}^{[0]}&=1;
\end{aligned}
\end{equation*}					
expression for $v_{1;<}^{[r]}$ follows from \eqref{stable} via $\lfloor(k-1)r/k\rfloor=r-1$.

Let us show that cluster transformations defined by the quiver $Q_{kn}$ produce regular functions. For the special vertex this
follows from Theorem \ref{main}. For other mutable vertices  we use determinantal identities~\eqref{jacobi}--\eqref{pluckpluck}.

In more detail, consider a perturbation 
\[
\Phi(\theta)=\Phi+\theta\sum_{i=1}^{n-1}e_{(k-2)(n-1)+i,(k-2)(n-1)+i-2}
\] 
of the core.  For every six-valent vertex $(i,j)$  in $Q_{kn}$
we apply~\eqref{pluckpluck} to the submatrix 
$B(\theta)=\Phi(\theta)_{[(k-j-1)(n-1) + i-1,(k-1)(n-1)]}^{[(k-j-1)(n-1) + i-2,(k-1)(n-1)]}$ 
of $\Phi(\theta)$ and get a polynomial  identity of degree $3(n-1)$ in $\theta$. 
The claim follows from considering the coefficient at $\theta^{n-1}$. 
Indeed, the submatrix $\Phi_{[t,(k-1)(n-1)]}^{[t,(k-1)(n-1)]}$ that defines the function $\tilde\phhi_t$
coincides with the submatrix of $\Phi$ of the same size with the upper left corner at row $t-s(n-1)$ and column
$t-sn$ for $s=1,2,\dots$. Note that for the function attached to $(i,j)$ we have $t=(k-j)(n-1)+i$, and the result follows. 

 For vertices $(i, 2)$, $i=1,\ldots, n-1$, one needs to apply \eqref{pluck} 
to the submatrix $B=\Phi_{[(k-3)(n-1) + i-1,(k-1)(n-1)]}^{[(k-3)(n-1) + i-2,(k-1)(n-1)]}$ with $\alpha=\delta=1$, $\beta=2$,
and $\gamma$ being the last row. The same holds for the vertex $(n-1,3)$ with $i=0$.
Finally, for vertices $(i,k)$, $i=2,\dots,n-1$, one needs to apply \eqref{jacobi} to
the submatrix $A=\Phi_{[i-1,(k-1)(n-1)]}^{[i-1,(k-1)(n-1)]}$ with $\alpha=\gamma=1$, $\beta=2$, and $\delta$ being the last column. The vertex $(1,k-1)$ is treated in the same way.

Similarly to the proof of Theorem~\ref{structure}, it remains to prove that all functions in $\FFF_{kn}$ are coprime, 
and that each non-frozen $f\in\FFF_{kn}$ is coprime with $f^*$. The first of the above claims is an immediate corollary
of the following statement.

\begin{lemma}\label{fknirr}
All functions in the family $\FFF_{kn}$ are irreducible.
\end{lemma}

The proof of Lemma~\ref{fknirr} is given in Section~\ref{varirr}.  The second claim above is provided by the following statement. 

\begin{lemma}\label{fkncop} 
Every non-frozen $f\in\FFF_{kn}$ does not divide the corresponding $f^*$.
\end{lemma}  

 The proof of Lemma~\ref{fkncop} is given in Section~\ref{varcop}. 
\end{proof}

\begin{remark} (i) We expect that the regular generalized cluster structure described in Theorem \ref{Band_structure}
is complete in $\O(\L_{kn})$.

(ii) Under certain mild non-degeneracy conditions, for any generalized cluster structure  there exists a compatible quadratic Poisson structure (see \cite[Proposition 2.5]{GSVDouble} for details). We expect this compatible Poisson structure to coincide with a (perhaps, modified) natural Poisson structure on the space of periodic finite difference operators introduced in
\cite{Anton} for the proof of complete integrability of generalized pentagram maps.
\end{remark}

Details and proofs of assertions mentioned in the above remark will be considered in a separate publication.

\begin{figure}[ht]
\begin{center}
\includegraphics[width=5cm]{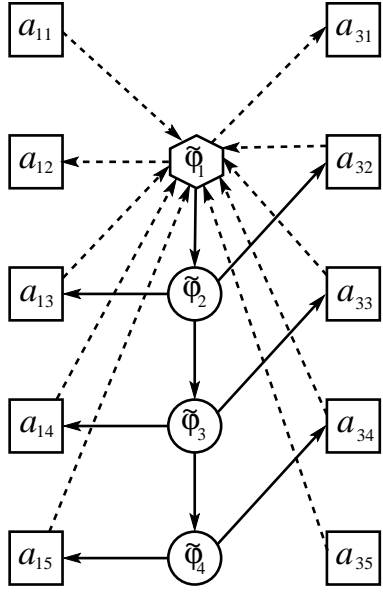}
\end{center}
\caption{Quiver $Q_{25}$}
\label{Q25}
\end{figure}

Let us examine the case $k=2$ in more detail. By \cite[Theorem 2.7]{CS}, the finite type classification for generalized cluster structures coincides with that for usual cluster structures. Consequently, $\GCC(\Sigma_{2n})$ is of type $C_{n-1}$.
In \cite{YZ}, every cluster structure of finite type with principal coefficients was given a geometric realization in the 
ring of regular of functions on a reduced double Bruhat cell corresponding to a Coxeter element of the Weyl group and its inverse. In the $A_n$ case, this double Bruhat cell consists of tridiagonal matrices $A$ in $SL_{n+1}$ with nonzero 
off-diagonal entries and with subdiagonal entries normalized to be equal to~$1$.
Then \cite[Theorem 1.1]{YZ} shows that the set of mutable cluster variables in such a realization coincides with the set of all dense principal minors of $A$.

We have the following analogue of \cite[Theorem 1.1]{YZ}.

\begin{proposition}
\label{YZ}
The set of mutable cluster variables in $\GCC\left (\Sigma_{2n} \right )$ coincides with the set of all distinct dense
 principal minors of $L\in\L_{2n}$ of size less than $n$.
\end{proposition}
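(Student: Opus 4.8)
The plan is to identify the mutable cluster variables of $\GCC(\Sigma_{2n})$ combinatorially and then match them, one by one, with the dense principal minors of $L$ of size less than $n$. Since $\GCC(\Sigma_{2n})$ is of type $C_{n-1}$, it has finitely many seeds and a known count of cluster variables, so the proof reduces to exhibiting enough explicit identifications and then invoking a counting argument. First I would catalogue the dense principal minors of $L\in\L_{2n}$: because $L$ is a five-diagonal ($k=2$, so band width $k+1=3$) $n$-periodic matrix, a dense principal minor of size $m<n$ is the determinant of a contiguous $m\times m$ block along the main diagonal, and by periodicity there are exactly $n$ starting positions for each size $m=1,\ldots,n-1$. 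I would verify that distinct (starting position, size) pairs give distinct polynomials, so that the count of distinct dense principal minors of size less than $n$ is $n(n-1)$, matching the expected number of mutable cluster variables in type $C_{n-1}$ with $n$-periodicity.

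Next I would set up the precise dictionary between these minors and the functions $\tilde\phhi_i$ together with the cluster variables produced by mutation. The key observation, already embedded in Section~\ref{band}, is that the trailing minors $\tilde\phhi_i$ of the core $\Phi$ (for $k=2$, $\Phi$ is $(n-1)\times(n-1)$ per block, reducible) are, up to the explicit factors $a_{1j}$ recorded in the relations \eqref{tildephi} and \eqref{longidXYtri21}, exactly dense principal minors of $L$ read off along the inner diagonal. Because the core is obtained from $L$ by deleting the first row of each block row and then truncating, each trailing minor $\tilde\phhi_i$ corresponds to a contiguous principal submatrix of $L$; I would make this correspondence explicit by tracking which rows and columns of $L$ survive into $\Phi_{[i,(k-1)(n-1)]}^{[i,(k-1)(n-1)]}$. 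This establishes that the initial seed functions $\{\tilde\phhi_i\}_{i=1}^{(n-1)}$ (together with the frozen variables, which are size-one minors $a_{31},\ldots$ on the extreme diagonals) already realize a family of dense principal minors.

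The heart of the argument is then to show that mutating the seed produces precisely the remaining dense principal minors and no others. I would argue inductively along the mutation sequences dictated by the mesh-of-triangles structure of $Q_{2n}$: each ordinary (non-special) mutation is governed by a Desnanot--Jacobi identity \eqref{jacobi} or a short Pl\"ucker relation \eqref{pluck}, as spelled out in the proof of Theorem~\ref{Band_structure}, and such a three-term identity expresses the new variable as a dense principal minor of $L$ whose size or position shifts by one from its neighbors. The special mutation at the vertex $(1,2)$, governed by the generalized exchange relation \eqref{longidXYtri21}, must be checked separately: its output $\tilde\phhi_1^*$ should again be identified with a dense principal minor, which I expect to follow from the irreducibility supplied by Lemma~\ref{coreirr} (hence Lemma~\ref{fknirr}) together with degree and leading-term considerations.

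The main obstacle will be the last step. Exhibiting that each mutation lands on a dense principal minor is local and three-term in nature, but proving that the mutations sweep out \emph{all} distinct dense principal minors of size less than $n$ — and exactly those, with no repeats or spurious non-minor variables — requires a global count: I would show that the $C_{n-1}$ cluster complex has exactly $n(n-1)$ mutable cluster variables, match this with the number of distinct dense principal minors computed above, and then use the injectivity of the minor-to-vertex assignment (itself resting on the irreducibility and coprimality Lemmas~\ref{fknirr} and \ref{fkncop}) to conclude that the two finite sets coincide. The delicate point is controlling the special-vertex orbit, where the non-binomial exchange relation \eqref{longidXYtri21} could a priori produce something other than a single minor; ruling this out is where I expect most of the work to lie.
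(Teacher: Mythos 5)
Your global skeleton is the same as the paper's: both sets have cardinality $n(n-1)$ (almost positive roots of $C_{n-1}$ on one side, $n$ starting positions times $n-1$ sizes on the other), so it suffices to exhibit every dense principal minor as a cluster variable, and the counting then rules out repeats and spurious variables automatically. That part of your plan is sound. The genuine gap is in the middle step, the mechanism by which the mutations are shown to sweep out \emph{all} the minors. Your proposal to "argue inductively along the mesh-of-triangles structure, each ordinary mutation being governed by \eqref{jacobi} or \eqref{pluck}" does not work as stated: the determinantal identities invoked in the proof of Theorem~\ref{Band_structure} only describe mutations \emph{from the initial seed} (that is what is needed there for regularity); they say nothing about second, third, and deeper mutations, and tracking those by hand through three-term identities amounts to re-proving a classification theorem. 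The paper avoids this with two specific ideas you are missing. First, temporarily freezing $\tilde\phhi_1$ exhibits $\tilde\phhi_2,\dots,\tilde\phhi_{n-1}$ as an initial cluster of an ordinary type $A_{n-2}$ structure, and then \cite[Theorem 1.1]{YZ} (Yang--Zelevinsky) is invoked wholesale to conclude that its cluster variables are exactly the non-wrapping minors $x_{[i,j]}$, $2\le i\le j\le n-1$. Second, the wrapping minors $x_{[i,j]}$ with $j<i-1$ are obtained by a cyclic-shift argument: the generalized mutation at the special vertex is computed \emph{explicitly} (for band matrices $\gamma=0$, so $\det K^*(U^{-1};e_1,v_\gamma)=-u_{12}$, and expanding $u_{12}$ via \eqref{bandU} gives $\tilde\phhi_1^*=x_{[3,1]}$ exactly), then mutations at $(2,2),(3,2),\dots,(n-1,2)$ shift every cluster variable cyclically and return the quiver to its initial form, so the whole operation can be iterated $n-2$ more times to reach every wrapping minor.

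Two further points. Your identification of the special-vertex output "by irreducibility plus degree and leading-term considerations" is not enough: irreducibility and degree count do not single out \emph{which} irreducible polynomial of degree $n-1$ the new variable is; the paper needs the explicit formula $\phhi_1^*=(-1)^{n+1}\det K^*(U^{-1};e_1,v_\gamma)(\det\bar Y)^2$ from \eqref{phi_star} and a direct expansion to land on $x_{[3,1]}$, and this identification is precisely what seeds the cyclic-shift iteration. Also, your parenthetical describing the frozen variables $a_{1j},a_{3j}$ as "size-one minors" on the extreme diagonals conflates them with principal minors: the size-one \emph{principal} minors are the diagonal entries $a_{2j}=x_{[j,j]}$, which are not frozen and must themselves be produced as mutable cluster variables by the shift argument.
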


\begin{proof} Since $\GCC\left (\Sigma_{2n} \right )$ is a generalized cluster structure of type $C_{n-1}$, the number of mutable cluster variables is $n(n-1)$, that is, the number of almost positive roots in $C_{n-1}$.  
Since this is also the number of distinct dense principal minors of 
$L\in\mathcal{L}_{2n}$ of size less than $n$, we only need to show that every such minor appears as a cluster variable in 
$\GCC\left (\Sigma_{2n} \right )$. In the spirit of \cite{YZ}, we denote by $x_{[i,j]}$ the dense principal minor of $L$ with diagonal entries $a_{2i}, a_{2,i+1},\ldots,a_{2,j-1}, a_{2j}$, where either $1\le i\le j\le n$, $(i,j)\ne (1,n)$, or 
$1\le j < i-1\le n-1$.

The initial cluster variables $\tilde\phhi_1,\ldots, \tilde\phhi_{n-1}$ are minors $x_{[i,n]}$, $i=2,\ldots,n$, contained 
in an $(n-1)\times (n-1)$ tridiagonal matrix $\Phi_{[1,n-1]}^{[1,n-1]}$. If we treat, temporarily, $\tilde\phhi_1$ as a 
frozen variable, $\tilde\varphi_2,\ldots, \tilde\varphi_{n-1}$ form an initial cluster of a cluster structure of finite  
type $A_{n-2}$, whose set of mutable cluster variables is the collection
$x_{[i,j]}$, $2\leq i \le j \leq n-1$, according to \cite[Theorem 1.1]{YZ}. (In \cite{YZ}, the corresponding tridiagonal matrix is normalized to have determinant $1$, and also all the subdiagonal entries are equal to one, however, the calculation needed to obtain the desired result goes through without any modifications). 

Next, we perform a generalized mutation from our initial cluster in direction $1$ using \eqref{longidXYtri21}. 
We claim that $\tilde\phhi_1^*$ is equal to $x_{[3,1]}$. Clearly, the degree of $\tilde\phhi_1^*$ in matrix entries of $L$ is equal to $n-1$. By \eqref{phi_star},
\begin{equation*}
\phhi_1^* = (-1)^{n+1} \det K^*(U^{-1}; e_1,v_\gamma)  
\left ( \det \bar Y\right )^2,
\end{equation*}
and so $\tilde\phhi_1^*=\phhi_1/\det\bar Y$ is proportional to the numerator of $\det K^*(U^{-1}; e_1,v_\gamma)$ viewed as rational function in terms of entries of $L$ with a coefficient that is a monomial in $a_{3j}$, $j=1,\ldots, n$. Since the degree of 
$x_{[3,1]}$ is $n-1$, we only need to show that $\det K^*(U^{-1}; e_1,v_\gamma)$ is proportional to  $x_{[3,1]}$. 

Recall that for band matrices $\gamma=0$, and so $v_\gamma$ defined in Lemma \ref{detphi2} is equal to 
$U e_2$. Then $w$ in Proposition \ref{long_identity} becomes 
$w =\left  [ -u_{22}, u_{12}\right ]$,
 and we obtain $\det K^*(U^{-1}; e_1,v_\gamma)= -u_{12}$. By \eqref{bandU}, 
\[
u_{12} = \frac{(-1)^{n+1} a_{31}}{\det Y} \left (a_{11}\det Y^{\hat 1\widehat{n-1}}_{\hat 1\widehat 2}-
a_{21} \det Y^{\hat 1\widehat n}_{\hat 1\widehat 2}  \right) = \frac{(-1)^{n}x_{[3,1]} }{\det Y}, 
\]
 and hence $\tilde\phhi_1^*=x_{[3,1]}$; here in the last equality we used the expansion of $x_{[3,1]}$ with respect to the last row. 

After the generalized mutation, the quiver is transformed as follows: all edges incident to the special vertex change
 direction, edges pointing from the vertex corresponding to $ \tilde\phhi_2$ to vertices corresponding to $a_{13}$ and $a_{32}$ disappear, but new edges appear instead pointing to  $\tilde\phhi_2$ from frozen vertices corresponding to $a_{11}, a_{14},\ldots, a_{1n}$ and  $a_{33},\ldots, a_{3n}$ (cf.~Fig.~\ref{Q25}). It is easy to check via \eqref{jacobi} for the 
submatrix of $L$  obtained from $\Phi_{[1,n-1]}^{[1,n-1]}$ by cyclically shifting second indices of all entries $a_{ij}$ up 
by~$1$ that mutation at the vertex $(2,2)$ transforms
$\tilde\phhi_2$ to $x_{[4,1]}$. Similarly, consequent mutations at the vertices $(3,2), (4,2),\dots,(n-1,2)$ transform
each $\tilde\phhi_i$ to $x_{[i+2,1]}$, $i=3,\dots,n-2$, and $\tilde\phhi_{n-1}$ to $x_{[1,1]}$. Moreover, the resulting quiver coincides with the initial one. 
Clearly, we can perform a similar shift operation $n-2$ more times and recover the rest of functions $x_{[i,j]}$ as cluster variables.
\end{proof}

\begin{remark} Proposition \ref{YZ} provides a geometric realization of generalized cluster structures of finite type $C_n$.
We should mention that generalized cluster algebras of this type but with {\em constant\/} exchange coefficients have been recently considered in \cite{Gl} in the context of study of representations of the quantum loop algebra of $sl_2$ at roots of unity, and in \cite[section 9]{LV}, where they were realized as {\em Caldero-Chapoton algebras\/} associated with a special triangulation of a polygon with one orbifold point.
\end{remark}

\section{Example 3: Exotic generalized cluster structure on $GL_6$.}
\label{exotic}

In~\cite{GSVMMJ} we initiated the study of cluster structures in the ring of regular functions on $GL_n$ compatible with
R-matrix Poisson--Lie brackets. Such brackets are classified by Belavin--Drinfeld triples 
$\bfG=(\Gamma_1, \Gamma_2, \gamma: \Gamma_1 \to \Gamma_2)$, where $\Gamma_1$ and $\Gamma_2$ are subsets of the set of
positive simple roots in the $A_{n-1}$ root system and $\gamma$ is a nilpotent isometry 
(see~\cite{GSVMMJ} for details).  The cluster structures corresponding to non-empty Belavin--Drinfeld triples
are called {\em exotic}. In~\cite{GSVPleth} we treated the
subclass of Belavin--Drinfeld triples that we called aperiodic. The first instance of a periodic Belavin--Drinfeld triple 
occurs for $n=6$ with the triple $\bfG$ given by
\begin{equation}\label{Gamma}
 \Gamma_1 = \{\alpha_1,\alpha_5\},\ \Gamma_2 = \{\alpha_2,\alpha_4\},\quad \gamma(\alpha_1) = \alpha_2, \
\gamma(\alpha_5) = \alpha_4.
 \end{equation}

It will  be convenient to denote elements of $D(GL_6)$ by $(R,S)$. 
Following the construction described in \cite{GSVPleth}, we consider a collection of matrices 
\[
\L_{\bfG}(R,S)=\{R, R_{[5,6]}^{[1,2]}, R_{[3,6]}^{[1,4]}, R_{[4,6]}^{[1,3]},  S_1^6, S_{[1,5]}^{[2,6]}, S^{[4,6]}_{[1,3]}, 
L_1(R,S), L_2(R,S)\},
\] 
where
$L_1=L_1(R,S)$, $L_2=L_2(R,S)$ both have a form \eqref{shape}, see Fig.~\ref{twomat1}.
 
\begin{figure}[ht]
\begin{center}
\includegraphics[width=10cm]{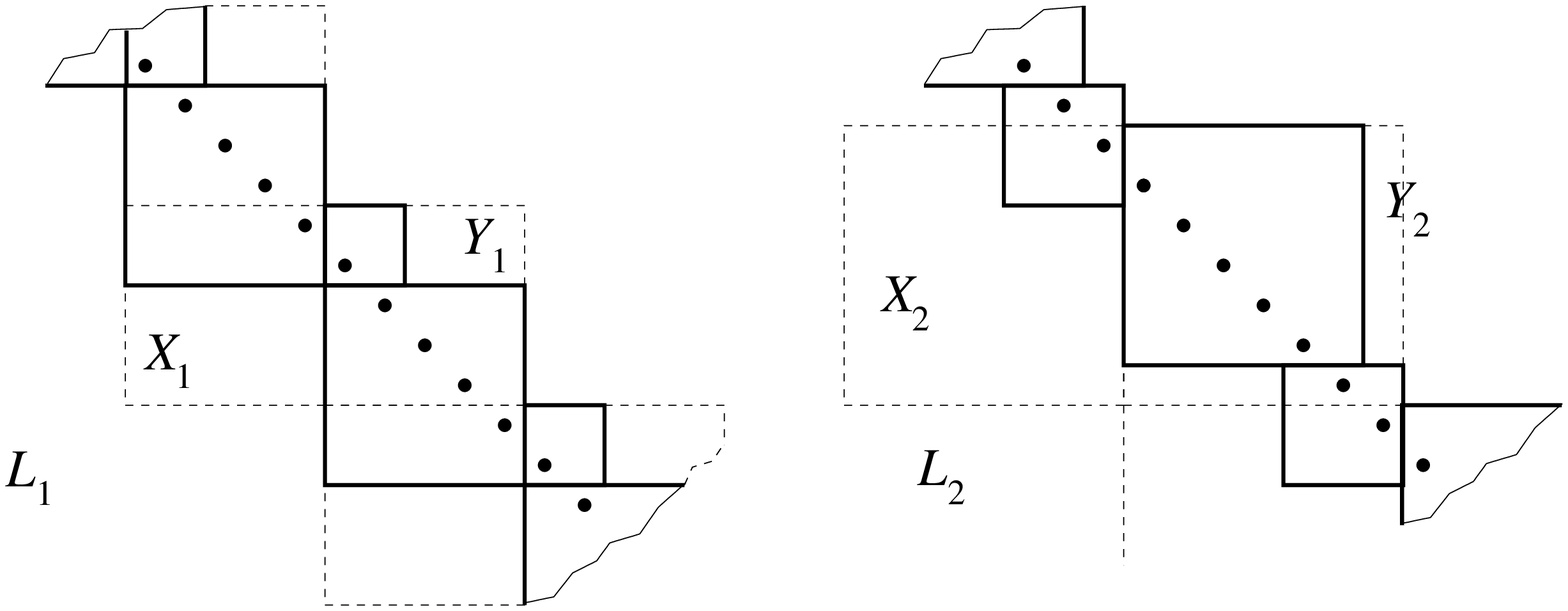}
\end{center}
\caption{Matrices $L_1$ and $L_2$}
\label{twomat1}
\end{figure}

Here  $2\times 2$  and $5\times 5$ blocks featured in $L_1$ are submarices
$R_{[5,6]}^{[1,2]}$ and $S_{[1,5]}^{[2,6]}$, while $3\times 3$  and $6\times 6$ blocks featured in $L_2$ are 
$S_{[1,3]}^{[4,6]}$ and $R$. $L_1$ is $5$-periodic, $L_2$ is $7$-periodic and each has one inner diagonal, which corresponds to 
$k=2$ in~\eqref{shape}. Overlaps between blocks in $L_1$, $L_2$ are prescribed by $\bfG$ (see \cite{GSVPleth} for details).

For $L_1(R,S)$ we choose  
\[
X_1 = \left [
\begin{array}{c}
 S_{[4,5]}^{[2,6]}\\ 0_{3\times 5} 
\end{array}
\right ],\quad
Y_1= \left [
\begin{array}{cc}
R_{[5,6]}^{[1,2]} & 0_{2\times 3} \\
S_{[1,3]}^{[2,3]} & S_{[1,3]}^{[4,6]} 
\end{array}
\right ],
\]
which corresponds to $a=2$, $b=0$,
while for $L_2(R,S)$ we choose  
\[
X_2 = \left [
\begin{array}{cc}
0_{2\times 4} & S^{[4,6]}_{[2,3]}\\ 0_{5\times 4} & 0_{5\times 3}
\end{array}
\right ],\quad
Y_2= \left [
\begin{array}{ccccccc}
r_{11} & r_{12} & r_{13} & r_{14} & r_{15} & r_{16} & 0 \\
r_{21} & r_{22} & r_{23} & r_{24} & r_{25} & r_{26} & 0 \\
r_{31} & r_{32} & r_{33} & r_{34} & r_{35} & r_{36} & 0 \\
r_{41} & r_{42} & r_{43} & r_{44} & r_{45} & r_{46} & 0 \\
r_{51} & r_{52} & r_{53} & r_{54} & r_{55} & r_{56} & 0 \\
r_{61} & r_{62} & r_{63} & r_{64} & r_{65} & r_{66} & 0 \\
0 & 0 & 0 & 0 & s_{14} & s_{15} & s_{16}
\end{array}
\right ],
\]
which corresponds to $a=6$, $b=4$.
Thus \eqref{Phi} results in 
\[
\Phi_1=\left [
\begin{array}{ccccc}
r_{61} & r_{62} & 0 & 0 & 0 \\
s_{12} & s_{13} & s_{14} & s_{15} & s_{16}  \\
s_{22} & s_{23} & s_{24} & s_{25} & s_{26}  \\
s_{32} & s_{33} & s_{34} & s_{35} & s_{36}  \\
s_{52} & s_{53} & s_{54} & s_{55} & s_{56}  
\end{array}
\right ]
\]
and
\[
\Phi_2=\left [
\begin{array}{ccccccc}
r_{21} & r_{22} & r_{23} & r_{24} & r_{25} & r_{26} & 0 \\
r_{31} & r_{32} & r_{33} & r_{34} & r_{35} & r_{36} & 0\\
r_{41} & r_{42} & r_{43} & r_{44} & r_{45} & r_{46} & 0\\
r_{51} & r_{52} & r_{53} & r_{54} & r_{55} & r_{56} & 0\\
r_{61} & r_{62} & r_{63} & r_{64} & r_{65} & r_{66}& 0 \\
0 & 0 & 0 & 0 & s_{14} & s_{15} & s_{16}\\
0 & 0 & 0 & 0 & s_{34} & s_{35} & s_{36}
\end{array}
\right ]. 
\]
Consequently, \eqref{detXYcor} yields
\begin{equation}
\label{detXYex}
\begin{split}
\det\left (\lambda Y_1 + \mu X_1\right) &=  \lambda^3 \left (\det S_{[1,5]}^{[2,6]}\mu^2  + c_{11}(R,S) \lambda\mu  + 
\det S_{[1,3]}^{[4,6]}\det R_{[5,6]}^{[1,2]} \lambda^2  \right ),\\
\det\left (\lambda Y_2 + \mu X_2\right) &=  \lambda^5 \left ( \det S_{[1,3]}^{[4,6]}\det R_{[3,6]}^{[1,4]}\mu^2  + 
c_{21}(R,S) \lambda\mu  +  s_{16}\det R \lambda^2 \right ). 
\end{split}
\end{equation}

Let us denote the functions associated with $\Phi_1$, $\Phi_2$ via \eqref{phis} by $\phhi_{1i}$, $1\le i\le 5$, 
and $\phhi_{2i}$, $1\le i\le 7$, respectively. Taking into account that 
\[
\det \bar Y_1=r_{62} \det S_{[1,3]}^{[4,6]},\qquad  
\det \bar Y_2=s_{16} \det R_{[2,6]}^{[2,6]}, 
\]
we obtain from~\eqref{longidXY} and~\eqref{detXYex}
\begin{equation}
\label{GammaId}
\begin{split}
\phhi_{11}\varphi_{11}^*&= 
\det S_{[1,5]}^{[2,6]} \det S_{[1,3]}^{[4,6]} r_{62}^2 + c_{11}(R,S) r_{62}\phhi_{12} + 
\det R_{[5,6]}^{[1,2]} \phhi_{12}^2 , \\
\phhi_{21}\phhi_{21}^*&=  s_{16}\det S_{[1,3]}^{[4,6]}\det R_{[3,6]}^{[1,4]} \left ( \det R_{[2,6]}^{[2,6]}\right )^2 + 
c_{21}(R,S)\det R_{[2,6]}^{[2,6]}\phhi_{22}  +  \det R \phhi_{22} ^2,
\end{split}
\end{equation}
where $\phhi_{11}^*$ and $\phhi_{21}^*$ are polynomial in the entries of $R$, $S$.

Recall that the family 
\[
\FFF_{n}^{\rm st}=\left \{ \{g_{ij}(R)\}_{1\le j\le i\le n},\ \{h_{ij}(R)\}_{1\le i < j\le n}\right\}
\] 
with $g_{ij}$ and $h_{ij}$ defined in the previous section 
is a cluster for the standard cluster structure on $GL_n$ that has a property that for every pair $i,j$ of indices between~$1$ and~$n$ there is a unique function in $\FFF_{n}^{\rm st}$ represented by a minor whose upper left entry is $r_{ij}$. These functions are attached in a natural way to vertices of the corresponding quiver, $Q_{n}^{\rm st}$, that form an 
$n\times n$ grid with all the vertices in the first row and column frozen. The edges $(i,j)\to (i+1,j+1)$, $(i+1,j)\to (i,j)$ and $(i,j+1)\to (i,j)$ form a mesh of consistently oriented triangles (except that edges between frozen variables are ignored). 

Let now $\FFF_\bfG$ be the family of functions that consists of all distinct dense trailing minors of matrices that comprise 
$\L_\bfG(R,R)$.  Alternatively, we can describe $\FFF_\bfG$ as
\[
\begin{split}
\FFF_\bfG =&\left (\FFF_{6}^{\rm st}\setminus 
\left \{ \{g_{i+1,i}(R)\}_{1\le i\le 5}, \ g_{61}(R), \{h_{i,i+2}(R)\}_{1\le i\le 4},h_{15}(R),h_{26}(R)\right\}
\right )\\
&\cup \left\{   \{\phhi_{1i}(R,R)\}_{1\le i\le 4}, \{\phhi_{2i}(R,R)\}_{1\le i\le 6} \right \}.
\end{split}
\] 
Note that $\FFF_\bfG$ contains only $34$ functions in contrast with $\FFF_{6}^{\rm st}$ which contains $36$. Specifically, none of the functions in $\FFF_\bfG$ is represented as a minor whose upper left entry is $r_{26}$ or $r_{46}$. All other 
$r_{ij}$ do appear in this way, and so we attach them to the corresponding nodes of a $6\times 6$ grid that will serve as
the vertex set of the quiver $Q_\bfG$ depicted in Fig.~\ref{qsl6}. Here the white vertices denote functions in the intersection $\FFF_\bfG\cap\FFF_{6}^{\rm st}$, the ones with the vertical filling refer to $\phhi_{1i}$, and the 
ones with the diagonal filling, to  $\phhi_{2i}$. The special vertices $(6,1)$ and $(2,1)$ correspond to  
$\phhi_{11}$ and $\phhi_{21}$, respectively. Strings of exchange coefficients attached to these vertices are 
$(1,c_{11}(R,R),1)$ and $(1,c_{21}(R,R),1)$, respectively. These are the only nontrivial strings in the set of strings 
$\P_\bfG$ that we associated with $Q_\bfG$ and $\FF_\bfG$. The corresponding generalized exchange relations are obtained from~\eqref{GammaId}:
\begin{equation*}
\begin{split}
\phhi_{11}\phhi_{11}^*&= 
h_{12} h_{14} g_{62}^2 + c_{11}(R,R) g_{62}\phhi_{12} + g_{51} \phhi_{12}^2 , \\
\phhi_{21}\phhi_{21}^*&=  h_{16}h_{14}g_{31} g_{22}^2 + c_{21}(R,R)g_{22}\phhi_{22}  +  g_{11} \phhi_{22} ^2.
\end{split}
\end{equation*}
\begin{figure}[ht]
\begin{center}
\includegraphics[width=8cm]{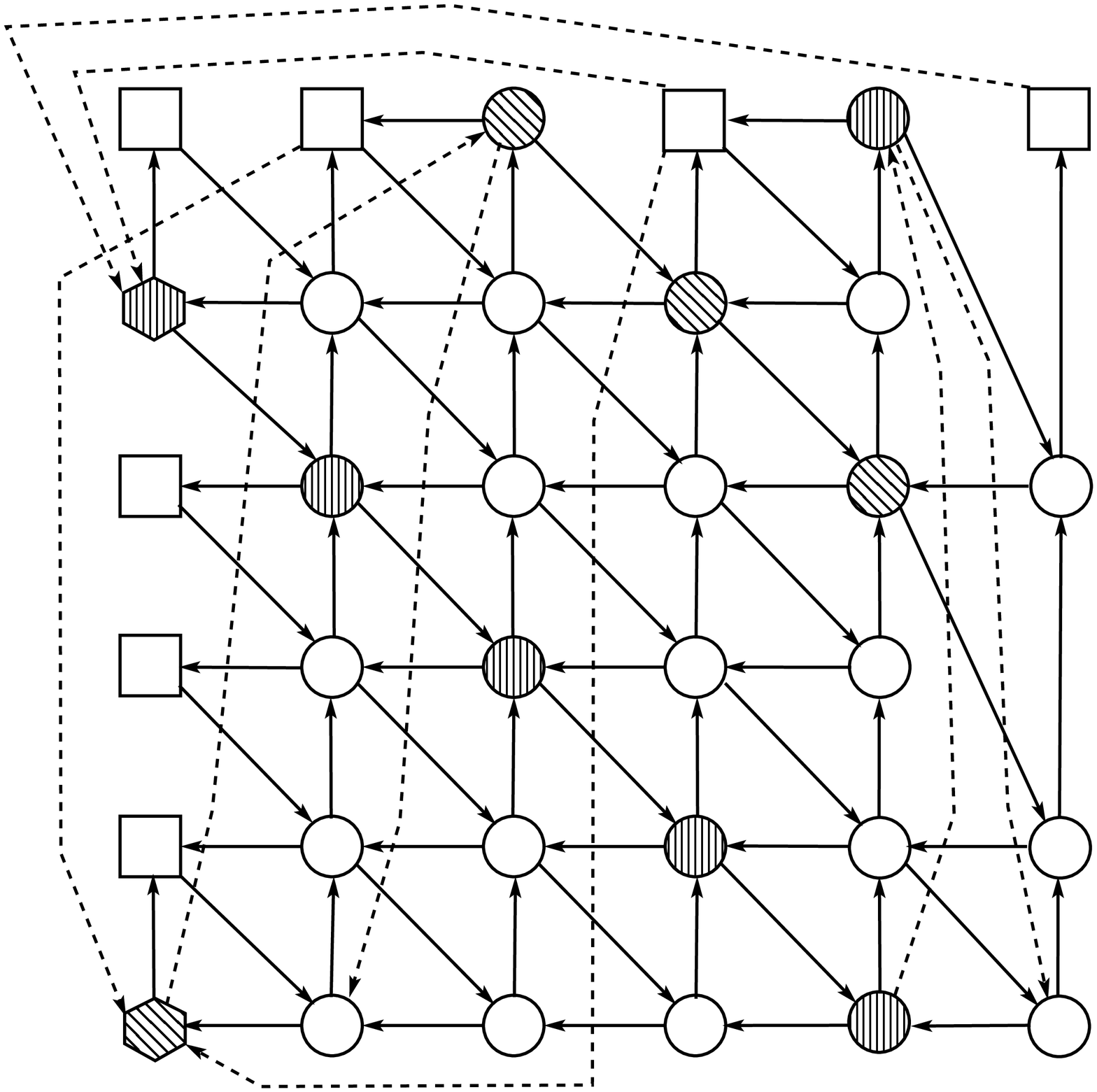}
\end{center}
\caption{Quiver $Q_\bfG$}
\label{qsl6}
\end{figure}

\begin{proposition}
\label{exex}
The seed $\Sigma_{\bfG}=(\FFF_{\bfG},Q_{\bfG},\P_{\bfG})$ defines 
a regular complete generalized cluster structure  
in the ring of regular functions on $GL_{6}$. This structure is compatible with the Poisson--Lie bracket 
$\Poi_\bfG$ specified by $\bfG$ given by~\eqref{Gamma}.
\end{proposition}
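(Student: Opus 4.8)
The plan is to mirror the proofs of Theorems~\ref{structure} and~\ref{Band_structure}, splitting the argument into three tasks: (i) show that the two relations in \eqref{GammaId} are bona fide generalized exchange relations of the form \eqref{exchange}; (ii) verify that every adjacent cluster variable is regular and that the requisite irreducibility and coprimality hold, so that the seed defines a \emph{regular complete} structure; and (iii) prove compatibility with $\Poi_\bfG$. Since the triple \eqref{Gamma} is the first \emph{periodic} Belavin--Drinfeld triple, the compatibility in (iii) lies outside the scope of the aperiodic machinery of \cite{GSVPleth}, and I expect it to be the principal obstacle.

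For (i), both $L_1$ and $L_2$ carry a single inner diagonal, so $k=2$ at each special vertex and the relations \eqref{GammaId} have degree $d=2$; their unique nontrivial strings are $(1,c_{11}(R,R),1)$ and $(1,c_{21}(R,R),1)$, exactly as recorded in $\P_\bfG$. I would read the cluster and stable $\tau$-monomials directly off \eqref{GammaId}: for $\phhi_{11}$ the quadratic term yields $u_{1;>}=\phhi_{12}$ with $v^{[2]}_{1;>}=g_{51}$, the constant term yields $u_{1;<}=g_{62}$ with $v^{[2]}_{1;<}=h_{12}h_{14}$, and the linear term forces the remaining stable monomials to be trivial, in agreement with \eqref{stable} and with the edges of $Q_\bfG$ at the hexagonal vertex $(6,1)$; the vertex $(2,1)$ is handled identically from the second line of \eqref{GammaId}.

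For (ii), regularity of $\phhi_{11}^*$ and $\phhi_{21}^*$ is precisely the output of Theorem~\ref{main} applied to $L_1$ and $L_2$. For the mutable vertices carrying $\phhi_{1i},\phhi_{2i}$ with $i\ge 2$, I would argue as in the $k=2$ instance of Theorem~\ref{Band_structure}, applying the identities \eqref{jacobi}--\eqref{pluckpluck} to trailing submatrices of the cores $\Phi_1,\Phi_2$ (and their one-column perturbations) to realize each mutation. For the white vertices, whose functions come from $\FFF_{6}^{\rm st}$, most exchange relations are the standard ones on $GL_6$; the delicate point is that removing the twelve listed minors and inserting the ten $\phhi$'s changes the neighborhoods of the white vertices adjacent to the special ones, so those few exchange relations must be re-derived individually by a suitable instance of \eqref{jacobi}--\eqref{pluckpluck}. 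Completeness then follows from \cite[Proposition 2.3]{GSVDouble} once one checks that the members of $\FFF_\bfG$ are pairwise coprime and that each non-frozen $f$ is coprime with its $f^*$; for this I would prove irreducibility of every function in $\FFF_\bfG$ by an argument modeled on Lemmas~\ref{fnirr} and~\ref{fknirr}, using the core-determinant irreducibility of Lemma~\ref{coreirr} for the $\phhi$'s, together with non-division statements in the spirit of Lemmas~\ref{fncop} and~\ref{fkncop}.

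The hard part is (iii). Here I would use that $\FFF_\bfG$ consists of dense trailing minors of the matrices comprising $\L_\bfG(R,R)$, and compute their pairwise $\Poi_\bfG$-brackets with the minor-bracket formulas developed in \cite{GSVMMJ,GSVPleth}. The functions lying in $\FFF_{6}^{\rm st}$ are automatically log-canonical among themselves, being part of the standard Belavin--Drinfeld cluster structure of \cite{GSVMMJ}; the genuinely new content is to show that each $\phhi_{1i}$ and $\phhi_{2i}$ brackets log-canonically with every other member of $\FFF_\bfG$, and then that the resulting coefficient matrix satisfies the compatibility relation with the exchange matrix of $Q_\bfG$ in its generalized form, with the two special directions entering through their multiplicity $d=2$. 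I anticipate that the most delicate computations are the brackets involving the special vertices, where the nontrivial jumps $\gamma(\alpha_1)=\alpha_2$, $\gamma(\alpha_5)=\alpha_4$ of \eqref{Gamma} produce correction terms; the crux is to show that these assemble exactly into constant log-canonical coefficients rather than spoiling log-canonicity. If a direct verification proves unwieldy, one may instead try to deduce the brackets involving the $\phhi$'s from the compatibility results of \cite{GSVMMJ,GSVPleth} by relating $\L_\bfG(R,R)$ to structures whose compatibility is already known, reducing the new computation to tracking the effect of the identification $S=R$.
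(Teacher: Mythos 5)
Your outline of parts (i) and (ii) for \emph{regularity} matches the paper's approach (the paper itself only sketches this: Theorem~\ref{main} for the special vertices, identities \eqref{jacobi}--\eqref{pluckpluck} for the others, with many details deferred and the compatibility check in (iii) done by MAPLE-assisted computation). However, there is a genuine gap in how you handle \emph{completeness}. You claim that ``completeness then follows from \cite[Proposition 2.3]{GSVDouble} once one checks'' the coprimality conditions, but that proposition only yields regularity of the generalized cluster structure --- this is exactly how it is used in the proofs of Theorems~\ref{structure} and~\ref{Band_structure}, both of which assert only \emph{regular} structures. Completeness is the stronger statement that $\UU(\GCC(\Sigma_\bfG))\otimes\C$ is isomorphic to $\O(GL_6)$, and coprimality alone does not give it. The paper proves it by an entirely different mechanism: explicit sequences of mutations that recover the matrix entries $x_{51}$, $x_{61}$ and $x_{ij}$, $i=1,3,4,5,6$, $j=2,3,4,6$, as cluster variables, and, for each remaining entry, two Laurent polynomial expressions of the form $M/f$ with $M\in\UU(\Sigma_\bfG)$ and coprime cluster-variable denominators $f$, so that \cite[Lemma 8.3]{GSVMem} places the entry in the upper cluster algebra. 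Nothing in your proposal plays this role.

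A second, smaller gap is in (iii): you assert that the functions in $\FFF_\bfG\cap\FFF_6^{\rm st}$ are ``automatically log-canonical among themselves, being part of the standard Belavin--Drinfeld cluster structure of \cite{GSVMMJ}.'' Compatibility of the standard family is with the \emph{standard} Poisson--Lie bracket (empty Belavin--Drinfeld triple), and log-canonicity with respect to one bracket does not transfer to $\Poi_\bfG$. Indeed, the very reason the twelve minors $g_{i+1,i}$, $g_{61}$, $h_{i,i+2}$, $h_{15}$, $h_{26}$ are excluded from $\FFF_\bfG$ is that the standard family fails to be log-canonical under $\Poi_\bfG$; so every pairwise bracket, including those between two white vertices, must be verified against $\Poi_\bfG$ (which is what the paper's MAPLE computation does). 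With these two corrections your plan would align with the paper's argument, which is itself only a sketch deferring full details to a follow-up of \cite{GSVPleth}.
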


\begin{proof} The proof is based on lengthy calculations, some of them straightforward, some {\em ad hoc\/}, 
and some relying on symbolic computations using MAPLE. In particular, the proof of regularity relies on Theorem~\ref{main} 
and identities \eqref{jacobi},\eqref{pluck}, \eqref{pluckpluck}, just like in the proofs of Theorem \ref{Band_structure} and Theorem \ref{structure}. The proof of compatibility of $\Sigma_{\bfG}$ with $\Poi_\bfG$ is MAPLE assisted.  To prove completeness, we constructed sequences of mutations that recover matrix entries $x_{51}, x_{61}$ and $x_{ij}$, $i=1,3,4,5,6$, 
$j=2,3,4,6$, as cluster variables. For each of the remaining matrix entries, we found two Laurent polynomial expressions 
of the form $\frac{M}{f}$, where $M\in \UU (\Sigma_{\bfG})$ and $f$'s entering two expressions for the same matrix element  are coprime cluster variables. By \cite[Lemma 8.3]{GSVMem}, this guarantees that matrix entries in question belong to 
$\UU (\Sigma_{\bfG})$. We omit the details of the proof since the general case of generalized cluster structures associated with Poisson brackets that arise in the Belavin--Drinfeld classification will be treated in a follow-up to 
\cite{GSVPleth}.
\end{proof}

Note that the choice of the periodic staircase structure for the matrices $L_1$ and $L_2$ is not unique. Each one of them admits one more such structure, as shown in Fig.~\ref{twomat2}.  

\begin{figure}[ht]
\begin{center}
\includegraphics[width=10cm]{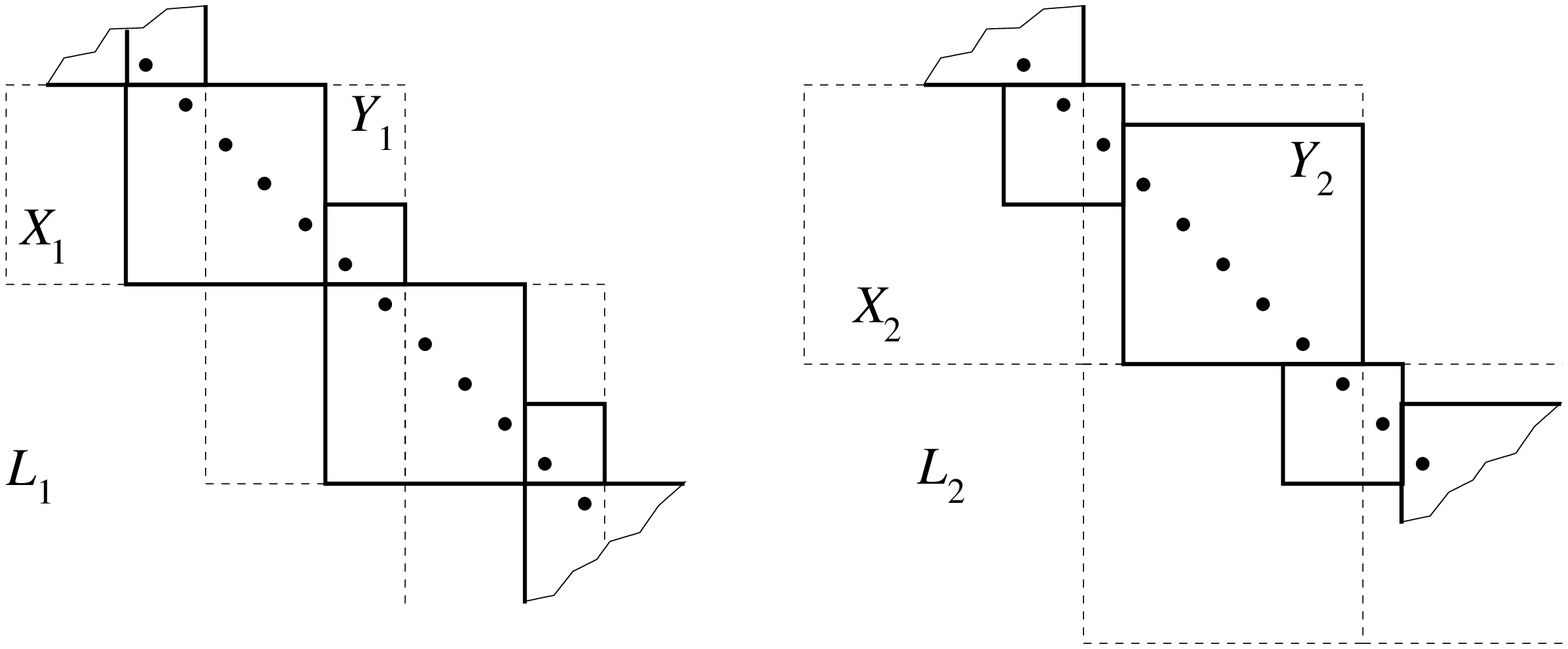}
\end{center}
\caption{Another partition of matrices $L_1$ and $L_2$}
\label{twomat2}
\end{figure}

Any pair of choices presented on Figs.~\ref{twomat1} and~\ref{twomat2} gives rise to a regular complete generalized cluster 
structure compatible with the Poisson--Lie structure $\Poi_\bfG$. It is interesting to investigate whether the seeds thus obtained are mutation equivalent.

\section{Properties of core minors}\label{corprop}

\subsection{Expressing core minors via $U$}\label{detlemmas}

\begin{proof}[Proof of Lemma~\ref{detphi1}] Using block-column operations, we obtain
\[
\phhi_1 =  \left ( \det Y \right )^{k-1} 
\det \left  [
\begin{array}{ccccc}
\left ( \one_n\right )_{[2,n]} & & & & \\
\left [ \begin{array}{c} W_{[2,a]}\\ 0 \end{array} \right ] & \left ( \one_n\right )_{[2,n]} & & & \\
 & \ddots & \ddots & & \\
  & & \left [ \begin{array}{c} W_{[2,a]}\\ 0 \end{array} \right ] & \left ( \one_n\right )_{[2,n]} & \\
  & & & W_{[2,a]} & Y_{[2,a]}^{[1,b]}
\end{array}
\right ].
\]
In the second determinant above there are rows containing a single non-zero entry equal to $1$. Removing these rows and corresponding columns, we can be further re-write it as
\begin{align}\nonumber
 &\ \varepsilon\det \left  [
\begin{array}{ccccc}
W_{[2,a]}^{[1]} & \left ( \one_a\right )_{[2,a]} & & & \\
 & \ddots & \ddots & & \\
  & & W_{[2,a]}^{[1,a]} &\left ( \one_a\right )_{[2,a]}  & \\
  & & & W_{[2,a]}^{[1,a]} & Y_{[2,a]}^{[1,b]}
\end{array}
\right ]\\
\nonumber
= &\ \varepsilon \det Y_2 
\det \left  [
\begin{array}{ccccc}
W_{[2,a]}^{[1]} & \left ( \one_a\right )_{[2,a]} & & & \\
 & \ddots & \ddots & & \\
  & & W_{[2,a]}^{[1,a]} &\left ( \one_a\right )_{[2,a]}  & \\
  & & & \left [ \begin{array}{cc} U_{[2,k]} &\star\\ W_{21} & W_{22}\end{array} \right ]  & \left [ \begin{array}{c} 0\\ \one_b \end{array} \right ] 
\end{array}
\right ]\\
\label{aux1}
= &\ \varepsilon \det Y_2 
\det \left  [
\begin{array}{cccc}
W_{[2,a]}^{[1]} & \left ( \one_a\right )_{[2,a]} & &  \\
 & \ddots & \ddots &  \\
  & & W_{[2,a]}^{[1,a]} &\left ( \one_a\right )_{[2,a]}   \\
  & & & \left [ \begin{array}{cc} U_{[2,k]} &\star \end{array} \right ]  
\end{array}
\right ]
\end{align} 
with $\varepsilon=(-1)^{n-1+(n-a)([k/2]-1)}$.
The first equality above is obtained by multiplying the last block row on the left by 
$\left [\begin{array}{cc} \one_{k} & - Y_1 Y_2^{-1} \\0 & \one_{b}\end{array} \right ]_{[2,a]}^{[2,a]}$. 

Next, transform the matrix featured in \eqref{aux1} by
multiplying the last block column on the right by $W^{[1,a]}$ and subtracting it from the previous one, 
then multiplying the $(k-2)$nd block column on the right by $W^{[1,a]}$ and subtracting it from the
$(k - 3)$rd one, etc., finally, multiplying the $2$nd block column by $W^{[1]}$ and subtracting
it from the first block column. The resulting matrix equals
\begin{equation}
\label{aux2}
 \left  [
\begin{array}{cccc}
0 & \left ( \one_a\right )_{[2,a]} & &  \\
 0 & 0 & \ddots &  \\
  & &  &\left ( \one_a\right )_{[2,a]}   \\
V_{k-2}^{[1]}  &\cdots  & V_1 & \left [ \begin{array}{cc} U_{[2,k]} &\star \end{array} \right ]  
\end{array}
\right ]
\end{equation}
with $V_i = (-1)^i \left [ U_{[2,k]}\  \star \right ] \left ( W^{[1,a]} \right )^i$ for $i=1,\ldots, k-2$.
Note that for $j=1,\ldots, k$ we have
\begin{align}
\nonumber
V_i^{[j]} & = (-1)^i \left (  
\left [\begin{array}{cc} \one_{k} & - Y_1 Y_2^{-1} \\0 & \one_{b}\end{array} \right ] \left ( W^{[1,a]} \right )^{i+1}
 \right )^{[j]}_{[2,k]}\\
 \nonumber
&= (-1)^i \left (    
\left [\begin{array}{cc} \one_{k} & - Y_1 Y_2^{-1} \\0 & \one_{b}\end{array} \right ] \left ( W^{[1,a]} \right )^{i+1}
\left [\begin{array}{cc} \one_{k} & -Y_1 Y_2^{-1} \\0 & \one_{b}\end{array} \right ]
 \right )^{[j]}_{[2,k]},
\end{align}
and hence \eqref{aux0} implies
\begin{equation}\label{aux3}
V_i^{[j]} = (-1)^i \left (U^{i+1}\right )^{[j]}_{[2,k]}.
\end{equation}
This means that the determinant of the matrix in \eqref{aux2} equals
\[
\varepsilon' \det \left [ V^{[1]}_{k-2}\ldots V^{[1]}_{1} U^{[1]}_{[2,k]} \right ]  = 
(-1)^{k-1}\varepsilon' \det \left [ U^{k-1}e_1 \ldots U^{2}e_1\  U e_1 \ e_1\right ]
\]
with $\varepsilon'=(-1)^{(k-1)(k-2)/2+(a-1)([k/2]-1)}$, and the claim of the lemma follows since
$(-1)^{k-1}\varepsilon\varepsilon'=(-1)^{k(k-1)/2+(n-1)[k/2]}=\varepsilon_1$.
\end{proof}

\begin{proof}[Proof of Lemma~\ref{detphi2}] Define $\bar W$ via $X^{[2,n]}\bar Y^{-1}=\left[\begin{array}{c} \bar W\\ 0_{(n-a)\times(n-1)} \end{array}\right ]$.
We proceed as in the proof of Lemma \ref{detphi1} and get
\begin{align*}\nonumber
\phhi_2 &= 
 \left ( \det Y \right )^{k-2}  \det\bar Y 
 \det \left  [
\begin{array}{ccccc}
 \left ( \one_{n-1}\right )_{[3,n]}  & & & & \\
 \left [ \begin{array}{c} \bar W_{[2,a]}\\ 0 \end{array} \right ] & \left ( \one_n\right )_{[2,n]} & & & \\
 & \ddots & \ddots & & \\
  & &  \left [ \begin{array}{c}  W_{[2,a]}\\ 0 \end{array} \right ] & \left ( \one_n\right )_{[2,n]}   & \\
  & & & W_{[2,a]}^{[1,a]} & Y_{[2,a]}^{[1,b]}
\end{array}
\right ]\\
\nonumber
&= -\varepsilon \left ( \det Y \right )^{k-2}  \det\bar Y \det Y_2
\det \left  [
\begin{array}{cccc}
\bar W_{[2,a]}^{[1]} & \left ( \one_a\right )_{[2,a]} & &  \\
 & \ddots & \ddots &  \\
  & & W_{[2,a]}^{[1,a]} &\left ( \one_a\right )_{[2,a]}   \\
  & & & \left [ \begin{array}{cc} U_{[2,k]} &\star \end{array} \right ] 
\end{array}
\right ],
\end{align*}
where $\varepsilon$ is the same as in \eqref{aux1}. Similarly to the proof of Lemma \ref{detphi1}, this yields
\[
 \phhi_2= (-1)^{k-1}\varepsilon_1 \left ( \det Y \right )^{k-2}  \det\bar Y \det Y_2  \det \left [ w \  U^{k-2}e_1 \ldots U^{2}e_1\  U e_1 \ e_1\right ],
\] 
where $w = (-1)^{k-2}\left [ \begin{array}{cc} U_{[2,k]} &\star\end{array}\right]\left( W^{[1,a]}\right )^{k-3}\bar W^{[1]}$. 

Next, factor $Y$ as $Y=
\left [\begin{array}{cc} 1 & \star   \\0 & \one_{n-1}\end{array} \right ]
\left [\begin{array}{cc}  \star & 0 \\ \star & \bar Y\end{array} 
\right]$. Then
\[ X Y^{-1}=
\left [\begin{array}{cc} 0\ X^{[2,n]} \end{array} \right ]
\left [\begin{array}{cc}  \star & 0 \\ \star & \bar Y^{-1} \end{array} 
\right ] 
\left [\begin{array}{cc} 1 & \star   \\0 & \one_{n-1}\end{array} \right ]
\]
implies $W =\left [\begin{array}{cc} W^{[1]}\ \bar W \end{array} \right ]
\left [\begin{array}{cc} 1 & \star   \\0 & \one_{n-1}\end{array} \right ]$. 
Consequently, $\bar W^{[1]} = W^{[2]} + \gamma  W^{[1]}$, where $\gamma$ is given by \eqref{gamma}.
It remains to use \eqref{aux3} to get $w = (-1)^{k-2} U^{k-1} (e_2 + \gamma e_1) 
= (-1)^{k-2} U^{k-2} v$.
\end{proof}

\begin{proof}[Proof of Lemma~\ref{detphilast}]
Similar considerations show that
\begin{align*}\nonumber
\phhi_{n+1} & =\left ( \det Y \right )^{k-2}  
 \det \left  [
\begin{array}{ccccc}
 \left ( \one_{n}\right )_{[3,n]}  & & & & \\
 \left [ \begin{array}{c} W_{[2,a]}\\ 0 \end{array} \right ] & \left ( \one_n\right )_{[2,n]} & & & \\
 & \ddots & \ddots & & \\
  & &  \left [ \begin{array}{c}  W_{[2,a]}\\ 0 \end{array} \right ] & \left ( \one_n\right )_{[2,n]}   & \\
  & & & W_{[2,a]} & Y_{[2,a]}^{[1,b]}
\end{array}
\right ]\\
\nonumber
&= \bar\varepsilon \left ( \det Y \right )^{k-2}   \det Y_2
\det \left  [
\begin{array}{ccccc}
 W_{[2,a]}^{[1,2]} & \left ( \one_a\right )_{[2,a]} & & \\
 & \ddots & \ddots &  \\
  & & W_{[2,a]}^{[1,a]} &\left ( \one_a\right )_{[2,a]}   \\
  & & & \left [ \begin{array}{cc} U_{[2,k]} &\star \end{array} \right ] 
\end{array}
\right ]
\end{align*}
with $\bar\varepsilon=(-1)^{(n-a)([k/2]-1)}$, similarly to \eqref{aux1}. This leads to an analog of \eqref{aux2}, which
yields
\begin{align*}
\phhi_{n+1}&=\bar\varepsilon\bar\varepsilon'  
\left ( \det Y \right )^{k-2} \det Y_2  \det \left [U^{k-2}e_1\ U^{k-2}e_2\ U^{k-3}e_1  \ldots U^{2}e_1\  U e_1 \ e_1\right ]\\
&= \varepsilon_{n+1}
\left ( \det Y \right )^{k-2} \det Y_2  \det \left [U^{k-2}e_1\ U^{k-3}v_\gamma\ U^{k-3}e_1 \ldots U^{2}e_1\  U e_1 \ e_1\right],
\end{align*} 
where $\bar\varepsilon'=(-1)^{(a-1)([k/2]-1)+k(k-1)/2-1}$.
\end{proof}

\subsection{Irreducibility of core minors}\label{varirr}

\begin{proof}[Proof of Lemma~\ref{coreirr}]
For $X, Y$ given by \eqref{shape}, we say that $\phhi_1$ is of {\it type\/} $(n,k,b)$. The three parameters satisfy
conditions 
\begin{equation*}\label{cdcond}
n\ge k+b, \qquad k\ge 2, \qquad b\ge 0. 
\end{equation*}
\noindent The proof of irreducibility is based on induction on all the parameters.

For type $(n,2,0)$, the irreducibility of $\phhi_1$ is a well-known fact, since the corresponding core is an $n\times n$ matrix of independent variables. For type $(3,3,0)$ we have
\[
\phhi_1=\begin{vmatrix} 
y_{21} & y_{22} & y_{23} & 0 & 0 & 0\\
y_{31} & y_{32} & y_{33} & 0 & 0 & 0\\
x_{21} & x_{22} & x_{23} & y_{21} & y_{22} & y_{23}\\
x_{31} & x_{32} & x_{33} & y_{31} & y_{32} & y_{33}\\
0 & 0 & 0 & x_{21} & x_{22} & x_{23}\\
0 & 0 & 0 & x_{31} & x_{32} & x_{33}
\end{vmatrix},
\]
and its irreducibility can be verified by direct observation. In a similar way one can treat the case $(4,3,0)$.

Let now $\phhi_1$ be of type $(n,3,0)$ with $n>4$. Note that $\phhi_1$ is a homogeneous polynomial of degree $2$ in 
each variable. Assume that $\phhi_1=PQ$, then both $P$ and $Q$ are homogeneous. Let $y=y_{41}$. Note that the coefficient 
$c_y$ at $y^2$ in $\phhi_1$ equals $\pm\phhi_1(X^{[2,n]},Y_{[1,3]\cup[5,n]}^{[2,n]})$; the latter is of type $(n-1,3,0)$, 
and hence is irreducible by induction. Consequently, $P=c_yy^p+o(y^p)$ and $Q=y^q+o(y^q)$ with $p+q=2$. 

Further, $c_y$ has degree $2$ in $z=y_{52}$, and hence $\deg_zP=2$, $\deg_zQ=0$. Similarly to above, the coefficient $c_z$ at
$z^2$ in $\phhi_1$ is an irreducible polynomial of degree $2$ in $y$, and we conclude that $p=2$, $q=0$, and hence $Q$ is a constant.

Let now $\phhi_1$ be of type $(n,k,0)$ with $k>3$. Note that $\phhi_1$ is a homogeneous polynomial of degree $k-1$ in each 
variable. Assume that $\phhi_1=PQ$, then both $P$ and $Q$ are homogeneous. Let $y=y_{21}$. Note that the coefficient at 
$y^{k-1}$ in $\phhi_1$ equals $\pm\psi_1\det Z_1$, where $\psi_1=\phhi_1(\bar X,\bar Y)$ with 
$\bar X=X_{[2,n]}^{[2,n]}$ and $Z_1$ is an $(n-1)\times(n-1)$ matrix 
$\begin{bmatrix} Y_{[k+1,n]}^{[2,n]}\\ X_{[2,k]}^{[2,n]}\end{bmatrix}$. Note that
$\psi_{1}$ is a core determinant of type $(n-1,k-1,0)$, and hence is irreducible by induction, whereas $\det Z_1$ is irreducible as the determinant of a matrix of independent variables. Consequently, either

(i) $P=\psi_{1}y^p+o(y^p)$ and $Q=\pm\det Z_1y^q+o(y^q)$ with $p+q=k-1$, or

(ii)  $P=\psi_{1}\det Z_1y^p+o(y^p)$ and $Q=\pm y^q+o(y^q)$ with $p+q=k-1$.

\noindent In any case, the total degree of $P$ is at least $(k-2)(n-1)+p$, and the total degree of $Q$ is at most $n-1+q$.

Similarly to the treatment of the case $(n,3,0)$ above, we let $z=y_{32}$ and note that $\deg_z\psi_{1}=k-2$, and hence
$\deg_zP\ge k-2$ and $\deg_zQ\le 1$. The same reasoning as above shows that the coefficient at $z^{k-1}$ in
$\phhi_1$ equals $\pm\psi'\det Z'$, where $\psi'$ is a core determinant of type $(n-1,k-1,0)$, and hence irreducible by induction, and $\det Z'$ is the determinant of an $(n-1)\times(n-1)$ matrix of independent variables. Consequently, 
$P=c' z^{p'}+o(p')$ with $p'\ge k-2$, and there are four possibilities for $c'$: 

(a) $c'=\alpha' \psi'\det Z'$,

(b) $c'=\alpha' \psi'$,

(c) $c'=\alpha'\det Z'$,

(d) $c'=\alpha'$,

\noindent where $\alpha'$ is a non-zero constant. 
 
 The last two possibilities are ruled out immediately, since they imply that the total degree of $P$ is at most $n-1+p'$,
which is strictly less than $(k-2)(n-1)+p$. In case (ia) the comparison of the two expressions for the total degree of $P$
gives $(k-2)(n-1)+p=(k-1)(n-1)+p'$, which is equivalent to $p=n-1+p'$, and hence is impossible. Similarly, case (iib) yields
$n-1+p=p'$, which can be satisfied only if $p=0$, $p'=k-1$, and $n=k$. However, $p=0$ in case (iib) means that 
$P=\psi_{1}\det Z_1$, and hence $p'=\deg_zP=k-2$, a contradiction. In the remaining cases (ib) and (iia) we get $p=p'\ge k-2$
and $q=q'\le 1$. 

Assume first that $q=0$. In case (i) we get $Q=\pm \det Z_1$, and, simultaneously, $Q=\alpha'\det Z'$, a contradiction, 
since $Z'=\begin{bmatrix} Y_{[k+1,n]}^{1\cup[3,n]}\\ X_{[2,k]}^{1\cup[3,n]}\end{bmatrix}$. In case (ii) we get that $Q$ is a constant. So, in what follows we assume that $p=k-2$ and $q=1$.

Let now $t\ne y$ be an arbitrary entry in the second row of $Y$ or $X$. Applying the same reasoning as above, we get that 
$\deg_tP=k-2$, $\deg_tQ=1$, and the coefficients at $t^{k-2}$ in $P$ and at $t$ in $Q$ have a similar structure, that is, all of them simultaneously look either as in case (i), or as in case (ii).

Assume that all coefficients are as in case (i). Note that $\psi_{1}$ and all its analogs do not depend on the entries of the second row of $Y$. Consequently, one can write
\[
P=\sum_{j=1}^n y_{2j}^{k-2} \alpha_j\psi_{j} +R+S,
\]
where $\psi_{j}$ are core determinants of type $(n-1,k-1,0)$ depending on submatrices of $X$ and $Y$, 
$\alpha_j$ are non-zero constants with $\alpha_1=1$, $R$ contains all monomials in $P$ that depend on entries in the second row of $Y$ that are not included in the first sum, and $S$ contains only the monomials that do not depend on these entries. Besides, we can write 
\[
Q=\sum_{j=1}^n y_{2j} \beta_j\det Z_j+T,
\] 
where $Z_j$ are $(n-1)\times(n-1)$ matrices built of the entries in the rows $[k+1,n]$ of $Y$ and rows $[2,k]$ of $X$ similarly to $Z_1$, $\beta_j=\pm\alpha_j^{-1}$, and $T$ does not depend on the entries in the second row of $Y$. Consequently, $TS=0$,
since $\phhi_1$ does not contain monomials that do not depend on the entries of the second row of $Y$. Note that $S$ does not vanish since for every entry $t$ in the second row of $X$, $\deg_tP=k-2$ and the coefficient at $t^{k-2}$ in $P$ does not depend on the entries in the second row of $Y$. Therefore, $T=0$ and 
\begin{equation}\label{QinY}
Q=\sum_{j=1}^n y_{2j} \beta_j\det Z_j.
\end{equation}

Let us fix $t=x_{21}$. Recall that $\deg_tQ=1$. Similarly to the treatment of $y$ above, $Q=\bar\beta_1 t\det\bar Y+o(t)$, where $\bar\beta_1$ is a non-zero constant. On the other hand, it follows from \eqref{QinY} that $Q=t\det\bar Z_1 +o(t)$, 
where
\[
 \bar Z_1=\begin{bmatrix} 
\beta_2 y_{22} & \beta_3 y_{23} & \dots & \beta_n y_{2n}\\
y_{k+1,2} & y_{k+1,3} & \dots & y_{k+1,n}\\
\vdots & & & \vdots\\
y_{n2} & y_{n3} & \dots & y_{nn}\\
x_{32} & x_{33} & \dots & x_{3n}\\
\vdots & & & \vdots\\
x_{k2} & x_{k3} & \dots & x_{kn}
\end{bmatrix},
\]
a contradiction.

Assume now that all coefficients are as in case (ii). Then the same treatment as in case (i) leads to
$Q=\sum_{j=1}^n y_{2j} \beta_j$ for some non-zero constants $\beta_j$, and hence the coefficient at $x_{21}$ in $Q$ vanishes,
a contradiction.

To proceed further with the case $b>0$ we will need one more basic type, $(3,2,1)$, in which case
\[
\phhi_1=\begin{vmatrix} 
y_{21} & y_{22} & y_{23} & 0 \\
y_{31} & y_{32} & y_{33} & 0 \\
0 & x_{21} & x_{22} & y_{21} \\
0 & x_{31} & x_{32} & y_{31} 
\end{vmatrix}
\]
is irreducible via direct observation. 

Let now $\phhi_1$ be of type $(n,k,b)$ with $b>0$,
and let $\phhi_1=PQ$. Put $y=y_{21}$ and note that $\deg_y\phhi_1=k$.  
It is easy to see that the coefficient $\bar\psi_2$ at $y^k$ in $\phhi_1$ is itself a core determinant of type $(n-1,k,b-1)$, 
and hence is irreducible by induction. Consequently $P=y^p\bar\psi_2+o(y^p)$ and $Q=\pm y^q+o(y^q)$ with $p+q=k$. In particular, the total degree of $P$ is $(k-1)(n-1)+b-1+p$ and the total degree of $Q$ is $q$.

Similarly, for $z=y_{31}$  we have $P=\alpha z^{p'}\bar\psi_3+o(z^{p'})$ 
and $Q=\beta z^{q'}+o(z^{q'})$ with $p'+q'=k$, where $\bar\psi_3$ is a core determinant of type $(n-1,k,b-1)$ and 
$\alpha\beta=\pm1$ (the opposite case would imply $(k-1)(n-1)b-1+p=q'$, which is impossible). Total degrees of $\bar\psi_2$ 
and $\bar\psi_3$ coincide, so $p=p'$ and $q=q'$. Consequently, $p>0$, since otherwise $P=\bar\psi_2=\alpha\bar\psi_3$, 
a contradiction.

Consider first the case $b=1$ and $n=k+1>3$. Let $t=y_{3n}$, then $\deg_t\phhi_1=\deg_t\bar\psi_2=k-1$, and the coefficient at
$t^{k-1}$ in $\phhi_1$ equals $\bar\psi\det\bar Z$, where $\bar\psi$ is a core determinant of type $(k,k-1,1)$ and hence is 
irreducible by induction, and $\bar Z=[X_{[2,k+1]}^{[1,k]} \ Y_{[2,k+1]}^{[1]}]$ and hence $\det\bar Z$ is irreducible
as the determinant of a matrix of independent variables. Consequently, we have four possibilities similar to (a)--(d) 
above. The last two are ruled out via total degree comparison, since $k^2-k+p>2k$ for $k>2$ and $p>0$.  The second one yields 
$p=1$, in which case $Q=\det\bar Z$ and hence $\deg_yQ=1<k-1=q$, a contradiction. The remaining case yields
$p=k$ and $q=0$, hence $Q$ is a constant.

For $b=1$ and $n>k+1$ take $t=y_{n2}$ and note that $\deg_t\phhi_1=k-1$ and the coefficient $\bar\psi_n$ at $t^{k-1}$ in
$\phhi_1$ is a core determinant of type $(n-1,k,1)$ and hence is irreducible by induction. Moreover, $\deg_y\bar\psi_n=k$,
and hence $p=k$, $q=0$ and $Q$ is a constant.

Finally, for type $(n,k,b)$ with $b>1$ take $u=y_{32}$, then similarly to above, 
$P=\bar\alpha' u^{\bar p'}\bar\psi'+o(u^{\bar p'})$, where $\bar\psi'$ is a core determinant of type $(n-1,k,b-1)$ and hence
is irreducible by induction. Moreover, $\deg_y\bar\psi'=k$, and hence $p=k$, $q=0$ and $Q$ is a constant.
\end{proof}

\begin{proof}[Proof of Lemma~\ref{fnirr}] For $\phhi_1$ this fact is proved in
Lemma~\ref{coreirr} (cp. to the case of type $(n,n,0)$). For other functions $\phhi_i$ the proof is similar. It exploits the
fact that one can find two variables $y$ and $z$  such that the coefficient at the highest degree of the variable in $\phhi_i$
is either an irreducible polynomial or a product of two such polynomials, and that the highest degree of $z$ in $\phhi_i$
equals to the highest degree of $z$ in one of the above two polynomials for $y$. 

In more detail, for $2\le i\le n-1$
one takes $y=x_{n-1,i}$ and $z=x_{n,i+1}$. Then $\deg_y\phhi_i=\deg_z\phhi_i=n-1$ and the coefficients at 
$y^{n-1}$ and $z^{n-1}$
in $\phhi_i$ are equal to $\psi\det Z$, where $\psi$ is $\phhi_1$ for the size $n-1$ and $Z$ is an $(n-i)\times(n-i)$ matrix
of independent variables.

For the case of $\phhi_{pn+i}$, $1\le i\le n-1$, $1\le p\le n-3$, one takes $y=x_{n-1,i}$ and $z=x_{n,i+1}$. Then
$\deg_y\phhi_{pn+i}=\deg_z\phhi_{pn+i}=n-p-1$ and the coefficients at $y^{n-p-1}$ and $z^{n-p-1}$
in $\phhi_{pn+i}$ are equal to $\phhi_{(p-1)(n-1)+i}$ for the size $n-1$.

Finally, for the case of $\phhi_{pn}$, $1\le p\le n-2$, one takes $y=x_{n-1,n}$ and $z=y_{n1}$. 
Then $\deg_y\phhi_{pn}=n-p$ and $\deg_z\phhi_{pn}=n-p-1$,
the coefficient at $y^{n-p}$ in $\phhi_{pn}$ equals $\phhi_{(p-1)(n-1)+1}$ for the size $n-1$, while the coefficient 
at $z^{n-p-1}$ in $\phhi_{pn}$ equals the product of $\phhi_{p(n-1)}$ for the size $n-1$ by the determinant of an 
$(n-1)\times(n-1)$ matrix of independent variables. Further details are left to the interested reader.

Irreducibility of the remaining functions in the family $\FFF_n$ is discussed in~\cite[Section~6.3]{GSVDouble}.
\end{proof}

\begin{proof}[Proof of Lemma~\ref{fknirr}]
Irreducibility of the functions in $\FFF_{2n}$ is trivial. For $k>2$ we have to deal separately with functions $\tilde c_t$
and $\tilde\phhi_t$. 

To prove irreducibility of $\tilde c_t$, $1\le t\le k-1$, note that each such function is linear in all $a_{ij}$, 
$1\le i\le k+1$, $1\le j\le n$. Assume that $\tilde c_t=P_1P_2$ and that $P_1$ is linear in $a_{k+1,1}$
(and hence $P_2$ does not depend on $a_{k+1,1}$). Moreover, $P_1$ depends linearly in all nonzero entries in the 
first row of $X$ and in the first column
of $Y$, whereas $P_2$ does not depend on any of these entries.
Note that for any $a_{ij}$ as above there exists a staircase sequence 
$a_{k+1,1}=a_{i_0j_0},a_{i_1j_1},\dots,a_{i_lj_l},\dots,a_{i_rj_r}=a_{ij}$ such that $1\le i_l\le k+1$, $1\le j_l\le n$ for
$1<l<r$ and
every consecutive pair $(a_{i_{l-1}j_{l-1}},a_{i_lj_l})$ alternately lies in the same row or in the same column of the
matrix $X+Y$. Moving along this sequence and applying the same reasoning as above, we consecutively get that $P_1$ is 
linear in $a_{i_1j_1}, a_{i_2j_2},\dots,a_{ij}$,
and hence $P_2$ does not depend on $a_{ij}$, which means that $P_2$ is a constant.

Irreducibility of $\tilde \phhi_t$ is proved similarly to the proof of Lemma \ref{coreirr}. Below we sketch the proof for
$\tilde\phhi_1$; cases $t>1$ are treated in a similar way.

Let $\tilde\phhi_1$ be of type $(n,k)$. Take $x=a_{k+1,n}$, then $\deg_x\tilde\phhi_1=k-2$. The coefficient $c^x$ at $x^{k-2}$
in $\tilde\phhi_1$ equals $\psi_1^xa_{12}\cdots a_{1,k-1}D^x$ where $D^x=\det Y_{[k,n]}^{[1,n-k+1]}$. Consider 
$\psi_1^x$ as a polynomial of degree $(k-2)(n-k-2)$ in variables $a_{1,k+1}, \dots, a_{1,n-1}$. The constant term of
this polynomial is $\tilde\phhi_1$ of type $(n-1,k-1)$ for a shifted set of variables, and hence is irreducible by the induction hypothesis. Consequently, $\psi_1^x$ is irreducible since it is homogeneous as a polynomial in all variables 
$a_{ij}$. 

Assume that $\tilde\phhi_1=P'P''$. It follows from above that $P'=x^d\psi_1^xR'+o(x^d)$ and $P''=x^{k-2-d}R''+
o(x^{k-2-d})$ with $R'R''=a_{12}\cdots a_{1,k-1}D^x$. Consequently, $\deg P'\ge (k-2)(n-2)+d$ and $\deg P''\le n+k-d-3$.

Take $y=a_{k2}$, then $\deg_y\tilde\phhi_1=k-1$ and $\deg_y P'\ge\deg_y \psi_1^x=k-2$, and hence $\deg_y P''\le 1$.
Further, the coefficient $c^y$ at $y^{k-1}$ in $\tilde\phhi_1$ equals $\psi_1^ya_{13}\cdots a_{1k}D^y$ where
$D^y=\det Y_{[k+1,n]}^{[2,n-k+1]}$ and $\psi_1^y$ has the same structure as $\psi_1^x$. If $\deg_y P''=0$
then $P''$ is a factor of $c^y$, which is impossible. Consequently, $\deg_y P''=1$, and hence either 
\[
P'=y^{k-2}S'+o(y^{k-2}) \quad\text{and}\quad P''=y\psi_1^y S''+o(y)\quad\text{with}\quad S'S''= a_{13}\cdots a_{1k}D^y
\]
or 
\[
P'=y^{k-2}\psi_1^yS'+o(y^{k-2}) \quad\text{and}\quad P''=yS''+o(y)\quad\text{with}\quad S'S''= a_{13}\cdots a_{1k}D^y.
\]

In the first of the above two cases we get $\deg P''\ge (k-2)(n-2)+1$, which is strictly greater than $n+k-d-3$ for $k>3$. 
For $k=3$ either $\deg_x P'=0$ or $\deg_x P''=0$, which is impossible for the same reason as $\deg_yP''=0$. Consequently,
the second case holds true, and hence $\deg_xP'=k-3$ and $\deg_x P'=1$.Taking into account the reasoning above, we can write
$P''=xR''+yS''+T''$, where $T''$ does not depend on $x$ and $y$.

Consider now the coefficient $c$ at $x^{k-3}y^{k-1}$ in $\tilde\phhi_1$. On the one hand, $c$ is equal to $c^{xy}R'S''$, where
$c^{xy}$ is the coefficient at $y^{k-2}$ in $\psi_1^x$. The latter equals $\psi_1^{xy}a_{13}\cdots a_{1,k-1} D^{xy}$,
where $D^{xy}=\det Y^{[2,n-k+1]}_{[k,n-1]}$. On the other hand, $c$ is equal to $c^{yx}a_{13}\cdots a_{1k}D^y$, where $c^{yx}$
is the coefficient at $x^{k-3}$ in $\psi_1^y$. The latter equals $\psi_1^{yx}a_{13}\cdots a_{1,k-1} D^{yx}$,
where $D^{yx}=\det Y^{[2,n-k+2]}_{[k,n]}$. It is easy to see that $\psi_1^{xy}=\psi_1^{yx}$, hence we arrive at
$D^{xy}R'S''=D^{yx}a_{13}\cdots a_{1k} D^y$, which is clearly impossible.
\end{proof}

\subsection{Coprimality results}\label{varcop}

\begin{proof}[Proof of Lemma~\ref{gencop}] 
(i) Let $A$ be a matrix with distinct non-zero eigenvalues: $A=C^{-1}\diag(\lambda_1,\dots,\lambda_k)C$ with $\det C\ne0$.
We follow the proof of \cite[Proposition 8.1]{GSVDouble} and write
\[
\det K(A; e_1)=\Van(\lambda_1,\dots,\lambda_k)\prod_{i=1}^k c_{i1}
\]
and
\[
\det K^*(A; e_1, A^{-1}(e_2+\gamma e_1))=\Van(\lambda_1,\dots,\lambda_k)\prod_{i=1}^k w_{i}
\]
where $\Van$ is the Vandermond determinant and
\[
w_i=\sum_{j\ne i}\pm(c_{j2}+\gamma c_{j1})\lambda_j^{-1}\Van(\lambda_1,\dots,\hat\lambda_i,\dots,\hat\lambda_j,\dots,\lambda_k)
\prod_{m\ne i,j}c_{m1}.
\]
Pick $c_{11}=0$, then for any choice of $c_{i1}\ne 0$, $2\le i\le k$, and $c_{12}\ne 0$ one has $\det K(A; e_1)=0$ and
\[
w_i=\pm c_{12}\lambda_1^{-1}\Van(\lambda_2,\dots,\hat\lambda_i,\dots,\lambda_k)\prod_{m\ne 1,i}c_{m1}\ne 0,\qquad 2\le i\le k.
\]
Next, pick $\lambda_i=t^i$. Then for $t$ big enough the expression 
$\lambda_j^{-1}\Van(\lambda_2,\dots,\hat\lambda_j,\dots,\lambda_k)$ grows as $t^{\varkappa-j^2+2j}$ where $\varkappa$ depends only on $k$. Consequently, the leading term in the expression 
\[
w_1=\sum_{j=2}^k\pm(c_{j2}+\gamma c_{j1})\lambda_j^{-1}\Van(\lambda_2,\dots,\hat\lambda_j,\dots,\lambda_k)\prod_{m\ne 1,j}c_{m1}
\]
is obtained for $j=2$, and it suffices to pick
$c_{22}$ such that $c_{22}+\gamma c_{21}\ne 0$ to guarantee $w_1\ne 0$. The rest 
of $c_{ij}$ can be picked arbitrarily to 
satisfy condition $\det C\ne 0$, which yields  
$\det K^*(A; e_1, A^{-1}(e_2+\gamma e_1))\ne0$.

(ii) We have to refine the choice of $c_{ij}$ made in the proof of part (i). Note that an arbitrary principal leading 
minor of $A$ can be written as
\[
\det A_I^I=\sum_K\pm\det (C^{-1})_I^K\prod_{i\in K}\lambda_i\det C_K^I
=\frac1{\det C}\sum_K\pm\det C_{K^c}^{I^c}\prod_{i\in K}\lambda_i\det C_K^I,
\]
where $I=\{1,2,\dots,|I|\}$ and $|K|=|I|$. Our choice of $\lambda_i$ guarantees that for $t$ big enough the leading term in the above expression is
obtained when $K=\{k-|I|+1, k-|I|+2,\dots,k\}$. Consequently, condition $\det A_I^I\ne 0$ is guaranteed by $\det C_K^I\ne 0$
and $\det C_{K^c}^{I^c}\ne 0$. Clearly, these conditons can be satisfied via a suitable choice of the entries $c_{ij}$ distinct
from $c_{i1}$, $c_{12}$, and $c_{22}$. 
\end{proof}

\begin{proof}[Proof of Lemma~\ref{fncop}]
The claim for $f=\phhi_1$ follows from Lemma~\ref{gencop}(i). 
Indeed, fix an invertible $Y$ such that $\det\bar Y\ne 0$ and define $\gamma$
via~\eqref{gamma}. Next, pick $A$ that satisfies the conditions in Lemma~\ref{gencop}(i)
and put $X=A^{-1}Y$. Consequently, $\phhi_1(X,Y)=\det K(A;e_1)(\det X)^{n-1}=0$, while
\[
\phhi_1^*(X,Y)=\pm\det K^*(A;e_1,A^{-1}\bar v)(\det X)^{(n-1)(n-2)}(\det \bar Y)^n \ne 0
\]
 via~\eqref{phi_star}, and the claim follows. 

For $f=\phhi_i$, $2\le i\le n$, the claim is trivial, since in this range $\deg f^*<\deg f$. In the case $f=\phhi_i$, 
$n+1\le i\le N-n$, it follows from the explanations in the proof of Theorem~\ref{structure} that 
$\phhi_i^*=\phhi_{i+n}\phhi^0_{i-n}-\phhi_{i-n}\phhi^0_{i+n}$, where $\phhi_t^0$ is the
minor of $\Phi$ obtained by replacing the first column of $\phhi_t$ by the immediately preceding column.
Consider the specialization that sets to zero the entry $z$  ($y_{st}$  or $x_{st}$) that occupies position 
$(i,i)$ in $\Phi$ 
and all the entries of the matrices X and Y that lie in the same columns of $\Phi$ below $z$. 
It is easy to see that this specialization  implies vanishing of $\phhi_i$ and 
$\phhi_{i+n}$, since both minors acquire a zero column. However, the same specialization for $\phhi_{i-n}$ and
$\phhi_{i+n}^0$ yields nontrivial polynomials since the coefficient at $\bar z^{n-p}$ in the first one and
at $\bar z^{n-p-3}$ in the second one are nontrivial, where $\bar z$ lies immediately above $z$ in $\Phi$
and $p=\lfloor(i-1)/n\rfloor$. Consequently, $\tilde\phhi_i^*$ is not divisible by $\tilde\phhi_i$.
The cases $f=g_{ii}$, $2\le i\le n$, and  $f=h_{ii}$, $3\le i\le n$, are treated via the same specialization.
For $f=h_{22}$ the specialization is given by $y_{nj}=0$ for $2\le j\le n$.
\end{proof}

\begin{proof}[Proof of Lemma~\ref{fkncop}]
To prove the coprimality of $\tilde\phhi_t$ and $\tilde\phhi_t^*$ it suffices to check that the latter is not divisible by the former. For $t=1$ we need the following statement.

\begin{proposition}
\label{Lemma_3}
The image of the map $(X,Y)\mapsto U$ defined by \eqref{bandU} contains an arbitrary $k\times k$ matrix with nonzero trailing principal minors.
\end{proposition}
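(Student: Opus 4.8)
The plan is to strip off the two triangular factors we control for free and reduce everything to a soft statement about products of companion matrices. Write $\tilde X$ for the $k\times k$ upper-triangular block $\bigl[\begin{smallmatrix} a_{11} & \cdots & a_{k1}\\ & \ddots & \vdots\\ 0 & & a_{1k}\end{smallmatrix}\bigr]$ of $X$, so that by \eqref{bandU} we have $U=\tilde X\,B$ with $B=(Y^{-1})_{[n-k+1,n]}^{[1,k]}$. As $X$ ranges over the admissible matrices \eqref{XYband}, $\tilde X$ ranges over \emph{all} invertible upper-triangular $k\times k$ matrices. I will use the dual Gauss (``$UL$'') factorization: a matrix $M$ has all trailing principal minors nonzero if and only if $M\in B_+B_-$, where $B_+$ and $B_-$ denote the groups of invertible upper- and lower-triangular matrices. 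The strategy is to keep both $\tilde X\in B_+$ and the lower-triangular data inside $Y$ free, and to absorb the target $M$ into these two factors, leaving behind a clean condition on the ``middle'' of $Y$.

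First I would put $B$ into a transfer-matrix normal form. The columns $z_s=Y^{-1}e_s$, $1\le s\le k$, are computed from $Yz_s=e_s$ by banded forward substitution: for $r>s$ the entries obey the homogeneous order-$k$ recurrence $\sum_{j=0}^{k}a_{k+1-j,r}(z_s)_{r-j}=0$, whose one-step propagation $\bigl((z_s)_{r},\dots,(z_s)_{r-k+1}\bigr)^{T}=T_r\bigl((z_s)_{r-1},\dots,(z_s)_{r-k}\bigr)^{T}$ is governed by the companion matrix $T_r$ with first row $(-a_{k,r}/a_{k+1,r},\dots,-a_{1,r}/a_{k+1,r})$ and $1$'s on the subdiagonal. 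Since $a_{1,r},a_{k+1,r}\ne0$ and the remaining band entries of row $r$ are free, each $T_r$, $k+1\le r\le n$, is an arbitrary invertible companion matrix, and these $n-k$ matrices are chosen independently. The initial state at $r=k$ is, column by column, the reversed $s$-th column of $Y_0^{-1}$, where $Y_0=Y_{[1,k]}^{[1,k]}$ is the top-left corner of $Y$, an arbitrary invertible lower-triangular matrix. Reading off rows $[n-k+1,n]$ and reversing then gives $B=J\,G\,J\,Y_0^{-1}$, where $J$ is the $k\times k$ anti-diagonal permutation matrix and $G=T_nT_{n-1}\cdots T_{k+1}$.

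Substituting into $U=M$ yields $\tilde X\,J\,G\,J\,Y_0^{-1}=M$, i.e. $G=J\,\tilde X^{-1}MY_0\,J$. As $\tilde X^{-1}$ ranges over $B_+$ and $Y_0$ over $B_-$, the product $\tilde X^{-1}MY_0$ sweeps the double coset $B_+MB_-$; because $M$ has nonzero trailing minors it already lies in $B_+B_-$, so $B_+MB_-=B_+B_-$. Conjugating by $J$ turns this set into $B_-B_+$, the matrices with all leading principal minors nonzero. Hence the equation $U=M$ is solvable as soon as \emph{some} product $G=T_n\cdots T_{k+1}$ of invertible companion matrices has all leading principal minors nonzero: given such a $G$, one has $G\in B_-B_+=J\,B_+B_-\,J$, and matching the $UL$-factorizations of $M$ and of $JGJ$ recovers admissible $\tilde X\in B_+$ and $Y_0\in B_-$, the diagonal ambiguity of these factorizations giving exactly the freedom to keep $a_{1,r},a_{k+1,r}$ and the diagonal of $\tilde X$ nonzero.

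It remains to exhibit one such $G$. A single companion matrix with first row $(c_1,\dots,c_k)$ has $j$-th leading principal minor $(-1)^{j-1}c_j$, so any companion with all $c_i\ne0$ lies in $B_-B_+$ and settles the case $n=k+1$. For $n-k>1$ I would take $G=C^{\,n-k}$ with $C$ a companion matrix having distinct nonzero eigenvalues and check, via the Vandermonde diagonalization of $C$, that $\det\bigl(C^{\,n-k}\bigr)_{[1,j]}^{[1,j]}$ is not identically zero in the $c_i$ (it is nonzero for generic admissible parameters), so again $G\in B_-B_+$. The genuinely delicate part is not this algebra — which merely says that the companion-product variety meets the open dense big cell $B_-B_+$ — but the bookkeeping of the second paragraph: establishing that $B$ has \emph{exactly} the form $J\,G\,J\,Y_0^{-1}$ with the $T_r$ truly free and independent, and verifying that all the genericity conditions invoked along the way ($G\in B_-B_+$, and every extreme-diagonal band entry nonzero) can be arranged simultaneously. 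That simultaneous-nonvanishing check is where I expect the main effort to go.
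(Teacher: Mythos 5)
Your proposal is correct, and it takes a genuinely different route from the paper's. Both arguments share the same two starting observations: \eqref{bandU} factors as $U=\tilde X\,(Y^{-1})_{[n-k+1,n]}^{[1,k]}$ with $\tilde X$ ranging over all invertible upper-triangular matrices, and a matrix has nonzero trailing principal minors exactly when it is a product (invertible upper-triangular)$\,\times\,$(invertible lower-triangular). From there the paper realizes the corner block of $Y^{-1}$ via total nonnegativity: it takes a totally nonnegative band matrix, twists it by the sign matrix $\diag\left((-1)^i\right)$ (the paper's $J$ is a sign matrix, not your reversal matrix), notes that the corner block $M$ of the inverse is then invertible and totally nonnegative and hence admits an LU factorization $M=N_1B_1$, and finishes with explicit diagonal and block-triangular corrections of $Y$ so that the product telescopes to $U=BN$. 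You instead put the corner block in a transfer-matrix normal form, $(Y^{-1})_{[n-k+1,n]}^{[1,k]}=J\,T_n\cdots T_{k+1}\,J\,Y_0^{-1}$, with the $T_r$ independent, arbitrary invertible companion matrices and $Y_0$ an arbitrary invertible lower-triangular matrix, thereby reducing the proposition to exhibiting a single product of $n-k$ invertible companion matrices with all leading principal minors nonzero. I checked your reduction --- the independence of the parameters of $\tilde X$, $Y_0$ and the $T_r$, the identity $B=JGJY_0^{-1}$, and the double-coset argument --- and it is sound; moreover, the simultaneous-nonvanishing issue you flag at the end as the main difficulty is in fact automatic in your setup, since invertibility of $\tilde X$ and $Y_0$ is precisely the condition $a_{1i}\ne 0$, $a_{k+1,i}\ne 0$ for $i\le k$, and the companion normalization handles the rows $r>k$. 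The only substantive step left open is the one you name: that $C^{n-k}$ lies in $B_-B_+$ for a generic invertible companion matrix $C$. This is true and provable exactly as you suggest: writing $C=V\Lambda V^{-1}$ with $V$ the (reversed) Vandermonde eigenvector matrix, Cauchy--Binet gives the $j$-th leading minor of $C^m$ as $\sum_{|S|=j}\prod_{i\in S}\lambda_i^m\,\det V_{[1,j]}^S\det(V^{-1})_S^{[1,j]}$, and by Jacobi's complementary-minor identity every coefficient here is a nonzero multiple of Vandermonde determinants, so eigenvalues with well-separated moduli make the term $S=\{1,\dots,j\}$ dominate. In sum: your route buys a self-contained, elementary proof (no total-nonnegativity theory) together with an exact description of which corner blocks are realizable, while the paper's route buys brevity by outsourcing all genericity to standard facts about totally nonnegative matrices.
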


\begin{proof} In what follows $B$, $B_1$, $\bar B_1$ are $k\times k$ invertible upper triangular and $N$, $\bar N$, $N_1$
are $k\times k$ unipotent lower triangular. 
It suffices to show that for any $B$ and $N$ as above 
there exist $n\times n$ matrices $X$, $Y$ of the form~\eqref{XYband} such that $U$ defined by~\eqref{bandU} is given by $U= B N$.

Let $Y_0$ of the form described in \eqref{XYband} be totally nonnegative with  all combinatorially nontrivial minors nonzero and set $Y_1= J Y_0 J$, where $J=\diag \left ( (-1)^i)_{i=0}^{n-1} \right)$. Then $M=\left (Y_1^{-1} \right )_{[n-k+1,n]}^{[1,k]}$  is totally nonnegative and invertible since 
$\det M=  \det Y_1^{-1} \det {Y_1}_{[k+1,n]}^{[1,n-k]}$. Thus there exist $N_1$ and $B_1$ as above such that $M= N_1B_1$.

It is not hard to see that there exists an invertible positive diagonal matrix $D$ such that $NDB_1^{-1}=\bar B_1^{-1}\bar N$ for $\bar B_1$ and $\bar N$ as above. Let $Y_2$ be obtained via multiplying $Y_1$
on the left by an appropriate diagonal matrix so that
that $\left (Y_2^{-1} \right )_{[n-k+1,n]}^{[1,k]}=MD^{-1}$. Now, let 
$Y = Y_2 \begin{bmatrix} \one_{n-k} & 0\\ 0 &  N_1\bar N^{-1}\end{bmatrix}$ and 
$ X = \begin{bmatrix} 0 & B\bar B_1^{-1}\\ 0 & 0 \end{bmatrix}$. Then $X$, $Y$ are of the required form, 
$\left (Y^{-1} \right )_{[n-k+1,n]}^{[1,k]} = \bar N N_1^{-1}MD^{-1}=\bar N B_1D^{-1}$  and~\eqref{bandU} gives 
$U=B{\bar B_1}^{-1} \bar N B_1D^{-1}= BNDB_1^{-1}B_1D^{-1}= B N$.
\end{proof}

To complete the proof for $t=1$ we invoke Lemma~\ref{gencop}(ii) which guarantees that one can choose $A$ in Lemma~\ref{gencop}(i)
in such a way that the trailing principal minors of $A^{-1}$
are nonzero, and proceed as in the proof of Lemma~\ref{fncop}.

For $2\le t\le n$ the claim is trivial, since in this case $\deg\tilde\phhi_t^*<\deg\tilde\phhi_t$.

Let now $n< t < (k-2)(n-1)$. Then it follows from the explanations above that 
$\tilde\phhi_t^*=\tilde\phhi_{t+n}\tilde\phhi^0_{t-n}-\tilde\phhi_{t-n}\tilde\phhi^0_{t+n}$, same as in the previous section.
Find the unique pair $(i,j)$, $1\le i\le n-1$, satisfying $t=(k-j)(n-1)+i$ 
and consider the specialization $a_{ij}=a_{i-1,j+1}=\cdots=a_{i-p,j+p}=\cdots=a_{1,j+i-1}=0$, where the index $j+p$ is understood $\mod n$ 
and its value $j+p=1$ is excluded.
It is easy to see that this specialization  implies vanishing of $\tilde\phhi_t$ and 
$\tilde\phhi_{t+n}$, since both minors acquire a zero column. However, the same specialization for $\tilde\phhi_{t-n}$ and
$\tilde\phhi_{t+n}^0$ yields nontrivial polynomials since the coefficient at $a_{i+1,j-1}^j$ in the first one and
at $a_{i+1,j-1}^{j-3}$ in the second one are nontrivial. Consequently, $\tilde\phhi_t^*$ is not divisible by $\tilde\phhi_t$.

Finally, let $n\ge (k-2)(n-1)$. Then it follows from the explanations above that $\tilde\phhi_t^*=\tilde\phhi^0_{t-n}$, and the same 
specialization as above proves that $\tilde\phhi_t^*$ is not divisible by $\tilde\phhi_t$.
\end{proof}

\section*{Acknowledgments}

Our research was supported in part by the NSF research grant DMS \#1702054 (M.~G.), NSF research grant DMS \#1702115 (M.~S.), and ISF grant \#1144/16 (A.~V.). While working on this project, we benefited from support of the following institutions 
and programs: Centre International de Rencontres Math\'ematiques, Lumini (M.~G. and M.~S., Spring 2018), LAREMA, Universite
d'Angers (M.~G., M.~S., A.~V., Summer 2018), Mathematical Institute of the University of Heidelberg (M.~G., Summer 2018), University of Notre Dame Jerusalem Global Gateway and the University of Haifa (M.~G., M.~S., A.~V., Fall 2018), Michigan State University (A.~V., Spring 2019), Research Institute for Mathematical Sciences, Kyoto (M.~G., M.~S., A.~V., Spring 2019), Research in Pairs Program at the Mathematisches Forschungsinstitut Oberwolfach (M.~S. and A.~V., Summer 2019), Istituto Nazionale di Alta Matematica Francesco Severi and the Sapienza University of Rome (A.~V., Fall 2019) and the Mathematical Science Research Institute, Berkeley (M.~S., Fall 2019).  We are grateful to all these institutions for their hospitality and outstanding working conditions they provided. Special thanks are due to Bernard Leclerc who pointed to us references to papers \cite{Gl} and~\cite{LV}.

\end{document}